\newif\ifPDF
\tikzset{EdgeStyle/.style = {->}}
\tikzset{LabelStyle/.style= {fill=yellow}}
\theoremstyle{definition}
\newtheorem{definition}{Definition}[section]
\theoremstyle{remark}
\newtheorem{example}[definition]{Example}
\newtheorem{remark}[definition]{Remark}
\theoremstyle{plain}
\newtheorem{thm}[definition]{Theorem}
\newtheorem{theorem}[definition]{Theorem}
\newtheorem{prop}[definition]{Proposition}
\newtheorem{lemma}[definition]{Lemma}
\newtheorem{claim}[definition]{Claim}
\def\ol{\overline}
\def\ov{\overline}
\def\wh{\widehat}
\newcommand{\N}{\mathbb N}
\newcommand{\Z}{\mathbb Z}
\newcommand{\be}{\begin{equation}}
\newcommand{\ee}{\end{equation}}
\newcommand{\ba}{\begin{aligned}}
\newcommand{\ea}{\end{aligned}}
\newcommand{\mc}{\mathcal}
\newcommand{\R}{{\mathbb R}}
\numberwithin{equation}{section}
\newcommand{\ignore}[1]{}
\begin{document}

\title{Invariant measures for reducible generalized Bratteli diagrams}

\author{Sergey Bezuglyi}
%    Address of record for the research reported here
\address{Department of Mathematics,
University of Iowa, Iowa City, IA 52242-1419
USA, 
\newline
ORCID: 0000-0002-3212-1750}
\email{sergii-bezuglyi@uiowa.edu}

\author{Olena Karpel}
%    Address of record for the research reported here
\address{AGH University of Krakow, Faculty of Applied Mathematics, al. Adama Mickiewicza~30, 30-059 Krak\'ow, Poland \&
B. Verkin Institute for Low Temperature Physics and Engineering,
47~Nauky Ave., Kharkiv, 61103, Ukraine
\newline
ORCID: 0000-0001-5988-3774}

\email{okarpel@agh.edu.pl}

\author{Jan Kwiatkowski}
%    Address of record for the research reported here

\address{Faculty of Mathematics and Computer Science, Nicolaus Copernicus University, ul. Chopina 12/18, 87-100 Toru\'n, Poland}

\email{jkwiat@mat.umk.pl}

\subjclass[2020]{37A05, 37B05, 37A40, 54H05, 05C60}

\keywords{Borel dynamical systems, Bratteli-Vershik model, tail-invariant measures.}

\date{}

\begin{abstract}
   
In 2010, Bezuglyi, Kwiatkowski, Medynets and Solomyak [Ergodic Theory
Dynam. Systems 30 (2010), no.4, 973-1007] found a complete description of the set of probability ergodic tail invariant measures
on the path space of a standard (classical) stationary reducible Bratteli diagram. It was shown that every
distinguished eigenvalue for the incidence matrix
determines a probability ergodic invariant measure.  
In this paper, we show that this result does not hold for stationary reducible generalized Bratteli diagrams. We consider classes of stationary and non-stationary reducible generalized Bratteli diagrams with infinitely many simple standard subdiagrams, in particular, with infinitely many odometers as subdiagrams. We characterize the sets of all probability ergodic invariant measures for such diagrams and study partial orders under which the diagrams can support a Vershik homeomorphism.  
\end{abstract}

\maketitle
\tableofcontents

\section{Introduction}

Bratteli diagrams provide an important tool for the study of dynamical systems in measurable, Cantor, and Borel dynamics. A generalized Bratteli diagram is a natural extension of the notion of a classical (standard) Bratteli diagram, where each level has a countably infinite set of vertices. While standard Bratteli diagrams are particularly useful in Cantor dynamics to describe the simplex of 
probability tail invariant measures and study other properties of dynamical systems (see e.g. surveys \cite{Durand2010, BezuglyiKarpel2016, BezuglyiKarpel_2020, Putnam2018}), generalized Bratteli diagrams are used to model non-compact Borel dynamical systems \cite{BezuglyiDooleyKwiatkowski_2006}. Recent papers developed the study of dynamical systems on generalized Bratteli diagrams \cite{BezuglyiJorgensen2022, Bezuglyi_Jorgensen_Sanadhya_2022, Bezuglyi_Jorgensen_Karpel_Sanadhya_2023}. It was shown that
generalized Bratteli diagrams have many interesting 
phenomena in comparison with the standard case. For example,
one can find stationary generalized Bratteli diagrams with either a unique probability invariant measure, or uncountable many probability invariant measures, or without such measures at all. 

This paper continues the study of generalized
Bratteli diagrams and focuses mostly on a class of \textit{stationary reducible generalized Bratteli 
diagrams}. We
consider tail invariant measures and the existence of a Vershik map on such diagrams. The case of irreducible generalized Bratteli diagrams was considered in \cite{BezuglyiJorgensen2022,  Bezuglyi_Jorgensen_Karpel_Sanadhya_2023}.

The importance of the case of stationary Bratteli diagrams
is based on the following facts. Stationary standard 
Bratteli diagrams provide models of substitution dynamical systems (minimal and non-minimal ones), see  
\cite{Durand_Host_skau_1999}, \cite{Bezuglyi_Kwiatkowski_Medynets_2009}. The case of 
stationary generalized Bratteli diagrams is more 
challenging to study. Right now, it is known that 
a class of substitution dynamical systems on an infinite
alphabet (considered first by \cite{Ferenczi_2006}) can be realized as a generalized Bratteli diagram 
\cite{Bezuglyi_Jorgensen_Sanadhya_2022}. 

Having such a duality between the diagrams and substitution
dynamical systems, one can answer the principal question 
about an explicit description of the set of ergodic
probability measures. 
In \cite{BezuglyiKwiatkowskiMedynetsSolomyak2010}, the 
authors found a transparent algorithm for the construction 
of ergodic probability measures. For this, one needs 
to find all distinguished Perron eigenvalues of the 
incidence matrix. Then using the corresponding eigenvector,
the values of the invariant measure on the cylinder sets are determined by a simple formula (see 
Theorem \ref{thm-BKMS2010} for details). 

It seems rather surprising, but the mentioned theorem 
does not hold for stationary generalized Bratteli diagrams even in the case when all simple subdiagrams are standard odometers. Instead, we state a new result that provides 
a required characterization of probability tail invariant 
measures. For this, 
we use the method developed in \cite{BezuglyiKarpelKwiatkowski2015,  AdamskaBezuglyiKarpelKwiatkowski2017}, where ergodic invariant measures on standard Bratteli diagrams were obtained by using a procedure of an extension from a subdiagram. In this paper, we show that the same method can be used for generalized Bratteli diagrams. Moreover, for the class of the so-called reducible Bratteli diagrams with infinitely many odometers, this method gives all probability ergodic invariant measures on the diagram.

The outline of the paper is as follows. Section~\ref{Sect:prelim} provides main facts concerning standard and generalized Bratteli diagrams, the procedure of measure extension, and the description of all probability ergodic invariant measures for reducible stationary standard Bratteli diagrams. In Section~\ref{Sect:meas_ext_general}, we briefly discuss 
the procedure of measure extension from a vertex subdiagram for generalized Bratteli diagrams. 
In Section~\ref{Sect:DIO_meas_general}, we introduce 
the notion of reducible generalized Bratteli diagrams with infinitely many odometers and focus on the study of tail invariant measures and 
their extensions. We give a complete classification of probability ergodic invariant measures on such diagrams. 
In Section~\ref{Sect:Examples_DIO}, we present various classes of stationary and non-stationary reducible generalized Bratteli diagrams with infinitely many odometers and characterize their sets of probability ergodic invariant measures. Section~\ref{Sect:Vmap} is devoted to different orders on reducible generalized Bratteli diagrams with infinitely many odometers. For different classes of orders on such diagrams, we study whether the corresponding Vershik map can be extended to a homeomorphism. Our main results are presented in Theorems~\ref{Thm:Meas-ext-general}, \ref{thm:All-erg-inv-meas-DIO}, \ref{Thm:DIO_a_i_all_inv_m}, \ref{Thm:non-stat-DIO-ex-erg-mu}, \ref{Thm:orders-DIO}.

\section{Preliminaries}\label{Sect:prelim}

In this section, we recall the main definitions and facts 
concerning invariant measures on standard and generalized stationary Bratteli diagrams. For more details 
see \cite{BezuglyiKwiatkowskiMedynetsSolomyak2010},  
\cite{BezuglyiKarpel2016}, 
\cite{BezuglyiJorgensen2022}, 
\cite{Bezuglyi_Jorgensen_Karpel_Sanadhya_2023}. 

Throughout the paper, we will use the standard notation $\mathbb N, \Z, \R,$  
$\mathbb{N}_0 = \N \cup \{0\}$ for the sets of numbers, 
and $|\cdot |$ for the cardinality of a set.

\subsection{Basic definitions on Bratteli diagrams}

\begin{definition}
A {\it (standard) Bratteli diagram} is an infinite graph $B=(V,E)$ such that the vertex
set $V =\bigsqcup_{i\geq 0}V_i$ and the edge set $E=\bigsqcup_{i\geq 0}E_i$
are partitioned into disjoint subsets $V_i$ and $E_i$, where

(i) $V_0=\{v_0\}$ is a single point;

(ii) $V_i$ and $E_i$ are finite sets for all $i$;

(iii) there exists a range map $r \colon E \rightarrow V$ and a source map $s \colon E \rightarrow V$ such that $r(E_i)= V_{i+1}$ and $s(E_i)= V_{i}$ for all $i \geq 1$.
\end{definition}

\begin{definition}\label{Def:generalized_BD} A 
\textit{generalized Bratteli diagram} is a graded graph 
$B = (V, E)$ such that the 
vertex set $V$ and the edge set $E$ can be partitioned $V = \bigsqcup_{i=0}^\infty  V_i$ and $E = 
\bigsqcup_{i=0}^\infty  E_i$ so that the following 
properties hold: 
\vspace{2mm}

\noindent $(i)$ For every $i \in \mathbb{N}_0$, the number of vertices at each level 
$V_i$ is countably infinite, and the set $E_i$
of all edges between $V_i$ and $V_{i+1}$ is countable.
\vspace{2mm}

\noindent $(ii)$ For every edge $e\in E$, we define the 
range and 
source maps $r$ and $s$ such that $r(E_i) = V_{i+1}$ and 
$s(E_i) = V_{i}$ for $i \in \N_0$.  

\vspace{2mm}

\noindent $(iii)$ For every vertex $v \in V \setminus V_0$, we 
have $|r^{-1}(v)| < \infty$.  
\end{definition} 

Let $B = (V, E)$ be a standard or generalized Bratteli diagram. We will call the set $V_i$ the $i$th level of the diagram $B$. For generalized Bratteli diagrams, we will identify each $V_i$ with $\N$. Consider a 
finite or infinite sequence of edges $(e_i: e_i\in E_i)$ such 
that $s(e_i)=r(e_{i-1})$ which is called a finite or infinite 
path, respectively. We denote the set of infinite
paths starting at $V_0$ by $X_B$ and call it the \textit{path 
space}. For a finite path $\ol e = (e_0, ... , e_n)$, we denote 
$s(\ol e) = s(e_0), r(\ol e) = r(e_n)$. The set 
$$
    [\ol e] := \{x = (x_i) \in X_B : x_0 = e_0, ..., x_n = e_n\}, 
$$ 
is called the \textit{cylinder set} associated with $\ol e$. The {topology} on the path space $X_B$ is generated by cylinder sets. The path space $X_B$ is a zero-dimensional Polish space.

For vertices $v \in V_m$ and $w \in V_{n}$, we will 
denote 
by $E(v, w)$ the set of all finite paths between $v$ and $w$. Set $f^{(i)}_{v,w} = |E(v, w)|$ for every $w \in V_i$ and $v \in 
V_{i+1}$. In such a way,  we associate with the Bratteli diagram $B = (V,E)$ 
a sequence of non-negative matrices $(F_i)$, 
$i \in \N_0$ 
(called the \textit{incidence  matrices}) given by
\begin{equation}\label{Notation:f^i}
    F_i = (f^{(i)}_{v,w} : v \in V_{i+1}, w\in V_i),\ \   
    f^{(i)}_{v,w}  \in \N_0.
\end{equation} 
In the case of a generalized Bratteli diagram, incidence matrices are countably infinite.

For $w \in V_n$ and $n \in \N_0$, denote 
$$
X_w^{(n)} = \{x = (x_i)\in X_B : s(x_{n}) = w\}.
$$
The collection $(X_w^{(n)})_w$ of all such sets forms a partition 
$\zeta_n$ of $X_B$ into  
\textit{Kakutani-Rokhlin towers}
corresponding to the vertices from $V_{n}$. For $w \in V_n$ and $v_0 \in
V_0$, we set 
$$
h^{(n)}_{v_0, w} = |E(v_0, w)| 
$$ and define 
$$
H^{(n)}_w = \sum_{v_0 \in V_0} h^{(n)}_{v_0, w}, \ \ n \in \N.
$$ 
Set $H^{(0)}_w = 1$ for all $w\in V_0$.
This gives us the vector $H^{(n)} = (H^{(n)}_{w} : 
w \in V_n)$ associated with every level $n\in \N_0$. Since
$H^{(n)}_w = |E(V_0, w)|$, we call
$H^{(n)}_w$ the \textit{height of the tower} $X_w^{(n)}$ 
corresponding to the vertex $w\in V_n$.

\begin{definition}\label{Def:telescoping} Given a standard or generalized 
Bratteli diagram $B = (V,E)$ and a  monotone increasing sequence 
$(n_k : k \in \N_0)$  with $n_0 = 0$, we define a new Bratteli 
diagram $B' = (V', E')$ as follows: the vertex sets are 
determined by $V'_k = V_{n_k} $, and the edge sets  $E'_k = 
E_{n_{k}} \circ ...\circ E_{n_{k+1}-1}$ are formed by finite paths 
between the levels $V'_k$ and 
$V'_{k+1}$. The diagram $B' = (V', E')$  is called a 
\textit{telescoping} of the original diagram $B = (V,E)$. 
\end{definition}

\begin{remark}
    Notice that each telescoping of a generalized Bratteli diagram is again a generalized Bratteli diagram.
\end{remark}

\begin{definition}\label{Def:irreducible_GBD} (1) 
Let $B = B(F_n)$, where $(F_n)_n$ are the incidence matrices of $B$, be
a standard or generalized Bratteli diagram. If  $F_n = F$ for every $n \in 
\N$ (for standard diagrams) or every $n \in \mathbb{N}_0$ (for generalized diagrams), then the diagram $B$ is called \textit{stationary.} We 
will write $B = B(F)$ in this case.
 Unless stated otherwise, we will assume that every standard Bratteli diagram has a ``simple hat'', which means that there is a single edge from the vertex $v_0$ to every vertex $v \in V_1$.

(2) A standard Bratteli diagram is called \textit{simple} if there exists a telescoping
$B'$ of $B$ such that all entries of the incidence matrices of $B'$ are positive.  Since every vertex of a generalized Bratteli diagram can have only finitely many incoming edges, the notion of a simple Bratteli diagram cannot be applied to generalized diagrams.

(3) A generalized Bratteli diagram $B =(V,E)$ is called  \textit{irreducible} if 
for any vertices $i, j \in V_0$ and any level $V_n$ there exist 
$m 
> n$ and a finite path connecting $i \in V_n$ and $j \in V_m$. In 
other words, the $(j, i)$-entry of the matrix $F_{m-1} \cdots 
F_n$ 
is non-zero. Otherwise, the diagram is called \textit{reducible}.

\end{definition}

We will consider \textit{tail-invariant measures}
on the path space $X_B$ of a standard or generalized Bratteli diagram
$B$. By term \textit{measure} we always mean a non-atomic positive Borel measure.

\begin{definition}\label{Def:Tail_equiv_relation}
Let $B$ be a standard or generalized Bratteli diagram.
Two paths $x= (x_i)$ and $y=(y_i)$ in $X_B$ are called 
\textit{tail equivalent} if there exists  $n \in \mathbb{N}_0$ 
such that $x_i = y_i$ for all $i \geq n$. This notion defines a 
countable Borel equivalence relation $\mathcal R$ on the
path space $X_B$ (every equivalence class is countable and $\mathcal{R}$ is a Borel subset of $X_B \times X_B$) which is called the \textit{tail equivalence 
relation}.
A measure $\mu$ on 
$X_B$ is called \textit{tail-invariant} if, for any cylinder sets
$[\ol e]$ and $[\ol e']$ such that $r(\ol e) = r(\ol e')$, we have
$\mu([\ol e]) = \mu([\ol e'])$.
\end{definition}

\begin{thm}\label{BKMS_measures=invlimits}
 Let $B = (V,E)$ be a Bratteli diagram (generalized or standard) 
 with the sequence of incidence matrices $(F_n)$. Then:
\begin{enumerate}

\item Let  $\mu$ be a tail-invariant measure on $B$ which takes finite values on cylinder sets. For every 
$n\in \N_0$, define the vector $\ol p^{(n)} = \langle p^{(n)}_w : w \in V_n \rangle$, where 
\begin{equation}\label{eq:def_p_n}
    p^{(n)}_w= \mu([\ov e(w)]), \ \ w\in V_n. 
\end{equation} 
where $[\ov e(w)]$ is a cylinder set which ends in the vertex $w$.
Then the vectors   $\ol p^{(n)}$ satisfy the relation 
\begin{equation}\label{eq:formula_p_n}
F_n^{T} \ol p^{(n+1)} =\ol p^{(n)}, \quad n\geq 0.
\end{equation}

\item Conversely, suppose that $\{\ol p^{(n)}= (p_w^{(n)}) 
\}_{n \in \N_0}$ is a sequence of non-negative vectors such 
that $F_n^{T}\ol p^{(n+1)} =\ol p^{(n)}$ for all $n \in 
N_0$. Then there exists a uniquely determined tail 
invariant measure $\mu$ such that $\mu([\ov e(w)])= 
p_w^{(n)}$ for 
$w\in V_n$ and $n \in \mathbb N_0$.
\end{enumerate}

\end{thm}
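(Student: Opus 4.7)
\textbf{Proof proposal for Theorem \ref{BKMS_measures=invlimits}.}

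For part (1), the plan is to unpack the tail-invariance definition combinatorially. Given a tail-invariant $\mu$ and a vertex $w \in V_n$, any two cylinder sets $[\bar e]$ and $[\bar e\,']$ with $r(\bar e) = r(\bar e\,') = w$ have equal $\mu$-measure, so the quantity $p^{(n)}_w := \mu([\bar e(w)])$ in \eqref{eq:def_p_n} is well defined. Next, I would partition a cylinder $[\bar e(w)]$ according to the edge leaving $w$ at level $n$: writing
\begin{equation*}
[\bar e(w)] \;=\; \bigsqcup_{v \in V_{n+1}} \bigsqcup_{e \in E_n,\ s(e)=w,\ r(e)=v} [\bar e(w) \cdot e],
\end{equation*}
and observing that by tail-invariance each cylinder $[\bar e(w)\cdot e]$ with $r(e)=v$ has measure $p^{(n+1)}_v$, $\sigma$-additivity yields
\begin{equation*}
p^{(n)}_w \;=\; \sum_{v \in V_{n+1}} f^{(n)}_{v,w}\, p^{(n+1)}_v \;=\; (F_n^T \bar p^{(n+1)})_w,
\end{equation*}
which is exactly \eqref{eq:formula_p_n}. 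Note that for generalized diagrams the sum is a priori countably infinite; its convergence is guaranteed by $\sigma$-additivity of $\mu$ on the disjoint decomposition above.

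For part (2), the plan is to build $\mu$ by specifying its values on the semi-ring of cylinder sets and invoking the Carathéodory extension theorem. Define $\mu([\bar e(w)]) := p^{(n)}_w$ for every $w \in V_n$ and every finite path $\bar e(w)$ with $r(\bar e(w))=w$. I need to verify three things: (a) this definition does not depend on the specific representative $\bar e(w)$ ending at $w$ (which is automatic, since the right-hand side depends only on $w$); (b) the assignment is finitely additive on the algebra generated by cylinders — which is exactly the content of the relation $F_n^T \bar p^{(n+1)}=\bar p^{(n)}$, since partitioning $[\bar e(w)]$ by its one-step extensions reproduces $p^{(n)}_w = \sum_v f^{(n)}_{v,w} p^{(n+1)}_v$; (c) $\sigma$-additivity on the semi-ring of cylinders. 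Once (a)--(c) are checked, Carathéodory's theorem produces a unique Borel measure $\mu$ on $X_B$ with the prescribed cylinder values, and tail-invariance of $\mu$ follows directly from the fact that $p^{(n)}_w$ depends only on $w$, not on the finite path leading to it. Uniqueness follows because cylinder sets generate the Borel $\sigma$-algebra and a Borel measure is determined by its values on a generating $\pi$-system.

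The main subtlety — as opposed to the standard Bratteli case — is the $\sigma$-additivity and countability issue in step (c) for generalized diagrams. In the standard case each $V_n$ is finite, so cylinder extensions form a finite partition and $\sigma$-additivity reduces to finite additivity plus a standard compactness argument on $X_B$. In the generalized case, the one-step refinement of a cylinder $[\bar e(w)]$ is a countably infinite disjoint union, and one must argue that
\begin{equation*}
\mu([\bar e(w)]) \;=\; \sum_{v \in V_{n+1}} \sum_{e : s(e)=w,\, r(e)=v} \mu([\bar e(w)\cdot e]),
\end{equation*}
which is precisely \eqref{eq:formula_p_n}. I would then iterate this decomposition down to arbitrarily deep levels and use the ultrametric/zero-dimensional structure of $X_B$: any decreasing sequence of nonempty cylinders has nonempty intersection, so the premeasure is continuous from above at $\emptyset$ on the cylinder semi-ring, which upgrades finite additivity to $\sigma$-additivity. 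This is the only step where the generalized setting genuinely requires extra care; the rest of the argument is identical to the classical one.
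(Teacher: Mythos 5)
The paper does not actually prove this theorem: it states that the proof is straightforward and refers the reader to the cited works of Bezuglyi--Kwiatkowski--Medynets--Solomyak (standard case) and Bezuglyi--Jorgensen (generalized case). So the comparison below is with what a correct proof must contain rather than with an argument printed in the paper. Your part (1) is correct: the one-step refinement of a cylinder ending at $w\in V_n$ into cylinders ending at vertices of $V_{n+1}$, together with tail invariance and $\sigma$-additivity of $\mu$, gives exactly $p^{(n)}_w=\sum_{v\in V_{n+1}}f^{(n)}_{v,w}\,p^{(n+1)}_v=(F_n^T\bar p^{(n+1)})_w$.

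Part (2), however, has a genuine gap at precisely the step you single out as the one needing extra care. First, in the generalized case the cylinders do not form a semiring: for cylinders $[\bar v]\subset[\bar u]$ the difference $[\bar u]\setminus[\bar v]$ is a \emph{countably infinite} (not finite) disjoint union of cylinders, since a vertex may have infinitely many outgoing edges; so Carath\'eodory on ``the semi-ring of cylinders'' does not apply as stated. More seriously, the criterion you invoke --- a decreasing sequence of nonempty cylinders has nonempty intersection, hence continuity from above at $\emptyset$ --- only controls decreasing sequences of \emph{single} cylinders. The sets that actually arise when testing $\sigma$-additivity of a decomposition $[\bar u]=\bigsqcup_i[\bar v_i]$ are $D_n=[\bar u]\setminus\bigcup\{[\bar v_i]:\mathrm{lev}(\bar v_i)\le n\}$, which are countably infinite unions of level-$n$ cylinders; they can all be nonempty while $\bigcap_n D_n=\emptyset$, and showing that their total mass tends to $0$ is exactly the assertion to be proved, so the argument is circular. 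In the standard case compactness of $X_B$ closes this gap (finite unions of compact cylinders), but the generalized path space is not compact and mass could a priori escape to infinity along the levels. The standard repair is to note that $F_n^T\bar p^{(n+1)}=\bar p^{(n)}$ is exactly the Kolmogorov consistency condition for the (normalized) finite-dimensional distributions on the countable discrete sets of finite paths, and to apply the Kolmogorov/Ionescu--Tulcea extension theorem on the set of paths through each fixed initial edge (each of finite total mass), summing over $E_0$ to obtain the $\sigma$-finite measure; equivalently, one runs the dominated-convergence selection argument underlying Ionescu--Tulcea to exhibit a point of $\bigcap_n D_n$ whenever the masses of the $D_n$ stay bounded below. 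Your uniqueness argument via the $\pi$-system of cylinders is fine once existence is established.
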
 

The {proof} of Theorem \ref{BKMS_measures=invlimits} 
is 
straightforward and can be found in 
\cite{BezuglyiKwiatkowskiMedynetsSolomyak2010} (for 
classical Bratteli diagrams) and 
\cite{BezuglyiJorgensen2022} (for generalized Bratteli 
diagrams).

To define a dynamical system on the path space of a 
generalized 
Bratteli diagram, we need to take a linear order $>$ on each
(finite) set $r^{-1}(v),\ v
\in V\setminus V_0$. This order defines a partial order $>$ on 
the sets of edges $E_i,\ i=0,1,...$, where edges
$e,e'$ are comparable if and only if $r(e)=r(e')$. 

\begin{definition}\label{Def:Ordered_GBD}
A generalized Bratteli diagram $B=(V,E)$
together with a partial order $>$ on $E$ is called 
\textit{an ordered generalized Bratteli diagram} $B=(V,E,>)$. 
\end{definition}

We call a (finite or infinite) path $e= (e_i)$ 
\textit{maximal (respectively minimal)} if every
$e_i$ is maximal (respectively minimal) among all 
elements from $r^{-1}(r(e_i))$. Denote by $X_{max}$ ($X_{min}$) the sets of all infinite maximal (minimal) paths in $X_B$.

\begin{definition}\label{Def:VershikMap}  For an ordered 
generalized Bratteli diagram $B=(V,E,>)$, we define a Borel 
transformation 
$\varphi_B : X_B \setminus X_{max} \rightarrow X_B \setminus X_{min}$
as follows.  Given $x = (x_0, x_1,...)\in X_B\setminus X_{max}$, 
let $m$ be the smallest number such that $x_m$ is not maximal. Let 
$g_m$ be the successor of $x_m$ in the finite set $r^{-1}(r(x_m))$.
Then we set $\varphi_B(x)= (g_0, g_1,...,g_{m-1},g_m,x_{m+1},...)$
where $(g_0, g_1,..., g_{m-1})$ is the minimal path in $E(V_0, 
s(g_{m}))$. The map $\varphi_B$ is
a Borel bijection. Moreover, $\varphi_B$ is a homeomorphism from 
$X_B\setminus X_{max}$ onto $X_B\setminus X_{min}$. If  
$\varphi_B$ admits a bijective Borel extension 
to the 
entire path space $X_B$, then we call the Borel transformation 
$\varphi_B : X_B  \rightarrow X_B$ a \textit{Vershik map}, and 
the 
Borel dynamical system $(X_B,\varphi_B)$ is called a generalized 
\textit{Bratteli-Vershik} system.

\end{definition}

\begin{remark}
    If the cardinalities of $X_{max}$ and $X_{min}$ are the same then there always exists a Borel extension of $\varphi_B$ to the whole path space $X_B$. 
\end{remark}

 In general, every measure $\mu$ that is invariant with respect to a Vershik map is also tail-invariant. If the sets of $X_{min}$ and $X_{max}$ have zero
measure, then we can identify tail-invariant measures with 
measures invariant with respect to a Vershik map.

%------------------------------------------------
%----------PRELIM MEAS EXTENSION-----------------
%------------------------------------------------

\subsection{Subdiagrams and measure extension}
\label{subs subdiagrams} 
In this subsection, we give the basic definitions and results on subdiagrams of standard and generalized Bratteli 
diagrams. We also describe the notion of measure extension. We use 
the approach developed first in 
\cite{BezuglyiKarpelKwiatkowski2015} and
\cite{AdamskaBezuglyiKarpelKwiatkowski2017}.

Let $B = (V, E)$ be a standard or generalized Bratteli diagram. Consider nonempty subsets  $\ol V\subset V$ and $\ol E \subset E$ that can be written as $\ol V = \bigcup_n \ol V_n$ and $\ol E = \bigcup_n \ol E_n$, 
where $\ol V_n \subset V_n$ and $\ol E_n \subset E_n$.
Then we say that the pair $\ol B = (\ol V,\ol E)$ defines 
a {\em subdiagram} of $B$ if $\ol V = s(\ol E)$ and $s(\ol E)  = r(\ol E) \cup \ol V_0$. 

To define a {\em vertex} subdiagram of $B$, we begin with a sequence $\overline W = \{W_n\}_{n>0}$ of proper nonempty subsets $W_n$ of $V_n$, and set $W'_n = V_n \setminus W_n \neq \emptyset$ for all $n$. The vertex subdiagram $\overline B = (\ol W, \ol E)$ is formed 
by the vertices from $W_n$ and by the set of edges $\ol E_n$ 
whose source and range are in $W_{n}$ and $W_{n+1}$, respectively. Thus, the incidence  matrix $\ol F_n$ of $\ol B$ 
has the size $|W_{n+1}| \times |W_n|$, and it is represented 
by a block of $F_n$ corresponding to the vertices from 
$W_{n}$ and $W_{n+1}$. 

The path space $X_{\ol B}$ of a vertex subdiagram $\ol B$ is 
a closed subset of $X_B$. 
On the other hand, there are closed subsets of $X_B$ which are not obtained as the path space of a Bratteli subdiagram. It was proved in \cite{GiordanoPutnamSkau2004} 
 that a closed subset $Z \subset X_B$ is the path space of a subdiagram if and only if $\mathcal R|_{Z \times Z}$ is an etal\'{e} equivalence relation (see \cite{GiordanoPutnamSkau2004} for details).

Below we explain the procedure of measure extension from a subdiagram. This procedure was considered earlier in \cite{BezuglyiKarpelKwiatkowski2015} and
\cite{AdamskaBezuglyiKarpelKwiatkowski2017} for standard Bratteli diagrams, but it works also for generalized diagrams. The diagram $B$ below can be either a standard or generalized Bratteli diagram.
Let $\widehat X_{\ol B}:= \mathcal R(X_{\ol B})$ be the subset of all paths in $X_B$ that are tail equivalent to paths from $X_{\ol B}$. In other words, $\widehat X_{\ol B}$ is the
smallest $\mathcal R$-invariant subset of $X_B$ containing 
$X_{\ol B}$.
Let $\ov \mu$ be an ergodic tail invariant probability measure on $X_{\ol B}$. Then $\ol \mu$ can be canonically extended to the ergodic measure $\widehat{\ov \mu}$ on the space $\widehat X_{\ol B}$ by tail invariance, see \cite{BezuglyiKwiatkowskiMedynetsSolomyak2013}, 
\cite{AdamskaBezuglyiKarpelKwiatkowski2017}. More specifically, let the measure 
$\ol \mu$ be defined by a sequence of positive vectors 
$\{\ol p^{(n)} : n \in \N_0\}$ satisfying Theorem \ref{BKMS_measures=invlimits}, that is 
$\ol F_n(\ol p^{(n+1)}) = 
\ol p^{(n)}, n \in \N_0$, where $\ol F_n$ is the incidence
matrix for the subdiagram $\ol B$. 
Then, for every cylinder 
set $[\ol e] \subset X_B$ with $r(\ol e) = v \in \ol V_n$, we
set $\wh {\ol \mu}([\ol e]) = \ol p^{(n)}_v$. Then 
$\wh {\ol \mu}$ is defined on all clopen sets and it
can be finally extended to a Borel measure on of $X_B$.

Let $\ol B$ be a vertex subdiagram of a generalized Bratteli
diagram $B$ defined by a sequence of subsets $(W_i)$. 
Denote by $\wh X_{\ol B}^{(n)}$ the set of all paths $x = (x_i)_{i = 1}^{\infty}$ from $X_B$ such that the finite path  $(x_1, \ldots, x_n)$ ends at a vertex $v$ of $\ov B$, and the tail  $(x_{n+1},x_{n+2},\ldots)$ belongs to $\overline{B}$,
i.e., 
\begin{equation}\label{n-th level}
\wh X_{\ol B}^{(n)} = \{x = (x_i)\in \wh X_{\ol B} : r(x_i) \in W_i, \ \forall i \geq n\}.
\end{equation}
It is obvious that $\wh X_{\ol B}^{(n)} \subset \wh X_{\ol B}^{(n+1)}$, $\wh X_{\ol B} = \bigcup_n \wh X_{\ol B}^{(n)}$,
and 
\begin{equation}\label{extension_method}
\widehat{\ov \mu}(\wh X_{\ol B}) = \lim_{n\to\infty} \widehat{\ov \mu}(\wh X_{\ol B}^{(n)}) = 
\lim_{n\to\infty}\sum_{w\in W_n}  H^{(n)}_w \ov p^{(n)}_w.
\end{equation}
This limit can be finite or infinite. If it is finite, then we 
say that $\ol\mu$ admits a finite measure extension 
$\widehat{\ov \mu}(\wh X_{\ol B}) < \infty$.
To obtain an ergodic invariant measure on the whole path space $X_B$, we set $\widehat{\ov \mu}(X_B \setminus \wh X_{\ol B}) = 0$.

%-----------------------------------------------
%---PRELIM INV MEAS STAND STAT BD---------------
%-----------------------------------------------

\subsection{Invariant measures on stationary standard Bratteli diagrams}

In this subsection, we recall the explicit description of all probability ergodic invariant measures on stationary standard Bratteli diagrams, the exposition is based on \cite{BezuglyiKwiatkowskiMedynetsSolomyak2010}.

Let $B$ be a stationary standard Bratteli diagram with $N$ vertices on each level $n \geq 1$. We identify every set of vertices $V_n$ for $n \geq 1$ with the set $\{1, \ldots, N\}$. Denote by $F$ the corresponding incidence matrix and let $A = F^T$. 

One can associate to $A$ a directed graph $G(A)$ with the vertices $\{1, \ldots, N\}$ such that there is an arrow from $i$ to $j$ if and only if $a_{ij} > 0$. 
We will say that vertices $i$ and $j$ are equivalent if either $i = j$ or there are paths in $G(A)$ from $i$ to $j$ and from $j$ to $i$. Denote by $\mathcal{E}_i$, $i = 1, \ldots, m$ the corresponding equivalence classes. Then each class $\mathcal{E}_i$ defines a submatrix $A_i$ of $A$ obtained by restricting $A$ to the set of vertices from $\mathcal{E}_i$.
Identify the family of sets $\{\mathcal{E}_i\}_{i = 1}^m$ with the set $\{1, \ldots, m\}$, and define the partial order on $\{1, \ldots, m\}$ as follows: for $\alpha, \beta \in \{1, \ldots, m\}$, we have $\beta \succeq \alpha$ if either $\alpha = \beta$ or there is a path in $G(A)$ from a vertex in $\mathcal{E}_{\beta}$ to a vertex in $\mathcal{E}_{\alpha}$. In this case, we say that class $\beta$ has access to class $\alpha$. If $\beta \succeq \alpha$ and $\beta \neq \alpha$, we write $\beta \succ \alpha$. The partial order $\succ$ defines the reduced directed graph $R(A)$ of $G(A)$ as follows: the set of vertices of $R(A)$ is $\{1,\ldots, m\}$, there is an edge from a vertex $\beta$ to a vertex $\alpha$ if and only if $\beta \succ \alpha$.
One can enumerate the vertices of $B$ and classes in $\{1,\ldots, m\}$
in such a way that $A$ assumes a block triangular form and $\beta \succ \alpha$ if $\alpha > \beta$ (with the usual ordering on integers):
$$
A = \begin{pmatrix}
    A_1 & 0 & \ldots & 0 & Y_{1, s+1} & \ldots & Y_{1,m}\\
    0 & A_2 & \ldots & 0 & Y_{2, s+1} & \ldots & Y_{2, m}\\
    \vdots & \vdots & \ddots & \vdots & \vdots & \ddots & \vdots\\
    0 & 0 & \ldots & A_s & Y_{s, s+1} & \ldots & Y_{s, m}\\
    0 & 0 & \ldots & 0 & A_{s+1} & \ldots & Y_{s+1,m}\\
    \vdots & \vdots & \ddots & \vdots & \vdots & \ddots & \vdots\\
     0 & 0 & \ldots & 0 & 0 & \ldots & A_m\\
\end{pmatrix},
$$
where $s \geq 1$, the square matrices $\{A_{i}\}_{i = 1}^s$ on the main diagonal are non-zero irreducible matrices, each square matrix $\{A_{i}\}_{i = s+1}^m$ is either irreducible or a $1 \times 1$ zero matrix. For any $j = s+1, \ldots, m$, at least one of the matrices $\{Y_{k,j}\}_{k = 1}^{j-1}$ is non-zero. Moreover, the matrix $Y_{k,j}$ is non-zero if and only if there is an edge in $R(A)$ from $k$ to $j$. 
Further, we telescope $B$ so that each non-zero matrix $A_i$ on the main diagonal is strictly positive. For more details see for instance \S 4.4 in \cite{LindMarkus1995}, where $R(F)$ is called the graph of communicating classes.

Let $\rho_{\alpha}$ be a spectral radius (a Perron eigenvalue) of $A_{\alpha}$. A vertex (class) $\alpha \in \{1, \ldots, m\}$ is called \textit{distinguished} if $\rho_{\alpha} > \rho_{\beta}$ whenever $\beta \succ \alpha$. In particular, vertices $\{1, \ldots, s\}$ are distinguished vertices in $R(A)$ and $A_{\alpha} \neq 0$ for a distinguished class $\alpha$. Let $\ov B_{\alpha}$ be a simple stationary subdiagram of $B$ generated by vertices that belong to $\mathcal{E}_{\alpha}$.
We call a real number $\lambda$  a \textit{distinguished eigenvalue} for $A$ if there exists a non-negative eigenvector $\textbf{x}$ with $A\textbf{x} = \lambda \textbf{x}$. A real number $\lambda$ is a distinguished eigenvalue if and only if there is a distinguished class $\alpha$ in $R(A)$ such that $\rho_{\alpha} = \lambda$. The corresponding non-negative eigenvector $(\xi_{\alpha}(1), \ldots, \xi_{\alpha}(N))^T$ is unique (up to scaling) and $\xi_{\alpha}(i) > 0$ if and only if $i$ has access to $\alpha$ (see \cite{Victory1985, Schneider1986, TamSchneider2001}). The vector $(\xi_{\alpha}(1), \ldots, \xi_{\alpha}(N))^T$ it is called a \textit{distinguished eigenvector} corresponding to a distinguished eigenvalue $\lambda_{\alpha}$.

We sum up the results from \cite{BezuglyiKwiatkowskiMedynetsSolomyak2010} in the following theorem:

\begin{theorem} \label{thm-BKMS2010}
\cite{BezuglyiKwiatkowskiMedynetsSolomyak2010}
Let $B$ be a stationary standard Bratteli diagram and let $A$ be an $N \times N$ matrix which is the transpose of the incidence matrix.
Then every probability ergodic invariant measure on $X_B$ corresponds to a distinguished class of vertices in $R(A)$, a distinguished right eigenvector for $A$, and a corresponding distinguished eigenvalue for $A$. Conversely, every distinguished class of vertices in $R(A)$,  distinguished right eigenvector and a corresponding distinguished eigenvalue for $A$ generate a probability ergodic invariant measure on $B$ in the following way:
if $\mu_{\alpha}$ is a probability ergodic invariant measure corresponding to a distinguished class of vertices $\alpha$ in $R(A)$ then $\mu_{\alpha}$ is (up to constant multiple) the extension of a unique invariant measure $\ov \mu_{\alpha}$ from $\ov B_{\alpha}$. If 
$(\xi_{\alpha}(1), \ldots, \xi_{\alpha}(N))^T$ and $\lambda_{\alpha}$ are the corresponding distinguished eigenvector and eigenvalue then 
$$
\mu_{\alpha}([\ov e(v_0,w)]) = \frac{\xi_{\alpha}(w)}{\lambda_{\alpha}^{n-1}}, 
$$
where $[\ov e(v_0,w)]$ is a cylinder set corresponding to a finite path $\ov e(v_0,w)$ which
ends in a vertex $w$ on level $n \geq 1$. 
\end{theorem}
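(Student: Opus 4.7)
The plan is to translate the statement into linear algebra via Theorem \ref{BKMS_measures=invlimits}: since $B=B(F)$ is stationary, every tail-invariant measure $\mu$ on $X_B$ corresponds to a sequence $(\ov p^{(n)})_{n\in\N_0}$ of non-negative vectors satisfying $A\ov p^{(n+1)}=\ov p^{(n)}$, and the self-similar ansatz $\ov p^{(n)}=\lambda^{-(n-1)}\xi$ reduces this recursion to the eigenvalue equation $A\xi=\lambda\xi$ and yields the claimed cylinder formula for free. So the theorem reduces to classifying the non-negative eigenvectors of $A$ and matching them with tail-invariant probability measures.

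For the existence direction, fix a distinguished class $\alpha$ and its distinguished eigenpair $(\lambda_\alpha,\xi_\alpha)$ provided by the Victory-Schneider-Tam theory. I would set $\ov p^{(n)}:=\lambda_\alpha^{-(n-1)}\xi_\alpha$ and invoke Theorem \ref{BKMS_measures=invlimits} to produce a tail-invariant measure $\mu_\alpha$ realizing the formula $\mu_\alpha([\ov e(v_0,w)]) = \xi_\alpha(w)/\lambda_\alpha^{n-1}$. Since $\xi_\alpha(i)>0$ exactly when $i$ has access to $\alpha$, this measure is supported on the $\mathcal R$-saturation $\wh X_{\ov B_\alpha}$. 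I would then identify $\mu_\alpha$ with the canonical extension \eqref{extension_method} of the unique Perron measure $\ov\mu_\alpha$ on the simple stationary subdiagram $\ov B_\alpha$; its finiteness is exactly what the strict inequality $\lambda_\alpha>\rho_\beta$ for all $\beta\succ\alpha$ secures, since the heights $H^{(n)}_w$ for $w\in\mathcal E_\alpha$ are controlled by $\lambda_\alpha^n$ while $\ov p^{(n)}_w$ decays like $\lambda_\alpha^{-n}$. Ergodicity of $\mu_\alpha$ then follows because its restriction to $\ov B_\alpha$ is the Perron measure (ergodic by the classical Perron-Frobenius theorem for a primitive block, which one achieves by the telescoping made in the statement), and ergodicity is preserved by canonical extension to an $\mathcal R$-invariant set.

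For the exhaustion/uniqueness direction, given an ergodic tail-invariant probability measure $\mu$, I would pick the $\succ$-minimal class $\alpha$ with $\mu(\wh X_{\ov B_\alpha})>0$; by ergodicity this set has full measure, so $\mu$ restricts to an invariant probability measure on $\ov B_\alpha$. The classical Perron-Frobenius theorem applied to the primitive block $A_\alpha$ pins this restriction down uniquely: the restriction of $\ov p^{(n)}$ to $\mathcal E_\alpha$ must be $\lambda_\alpha^{-(n-1)}$ times the positive Perron eigenvector of $A_\alpha$, with $\lambda_\alpha=\rho_\alpha$. Iterating the recursion $A\ov p^{(n+1)}=\ov p^{(n)}$ along the block-triangular form of $A$, I would back-propagate this identification to classes $\beta\succ\alpha$: on each such $\beta$ the recursion becomes a linear system of the form $(\lambda_\alpha I - A_\beta)\ov p^{(n)}|_{\mathcal E_\beta} = (\text{known from lower classes})$, which has a unique non-negative solution precisely when $\rho_\beta<\lambda_\alpha$. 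This forces $\alpha$ to be distinguished and $\ov p^{(n)}$ to coincide (up to scaling) with $\lambda_\alpha^{-(n-1)}\xi_\alpha$, recovering $\mu=\mu_\alpha$.

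The principal obstacle will be this propagation/rigidity step in the converse: carefully translating the block-triangular structure of $A$ and the strict Perron gaps into the statement that, once the $\alpha$-component of $\ov p^{(n)}$ is identified, the components on classes with access to $\alpha$ are forced, uniquely and non-negatively, by back-substitution in $A\ov p^{(n+1)}=\ov p^{(n)}$, while the components on classes without access to $\alpha$ are forced to vanish by the $\succ$-minimal choice of $\alpha$ and probability of $\mu$. This is precisely where the reducible Perron-Frobenius theory of Victory and Schneider-Tam performs the heavy lifting, reducing what looks like an infinite-dimensional ambiguity in the sequence $(\ov p^{(n)})$ to the finite combinatorial datum of a distinguished class together with its eigenvector and eigenvalue.
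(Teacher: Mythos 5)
This theorem is not proved in the paper at all: it is imported verbatim from \cite{BezuglyiKwiatkowskiMedynetsSolomyak2010} (the text preceding it says ``we sum up the results from [BKMS2010]''), so there is no in-paper argument to compare against. That said, your outline is essentially the proof strategy of that reference and of the measure-extension machinery the present paper builds on: decompose $X_B$ into the pairwise disjoint tail-invariant saturations $\wh X_{\ov B_\alpha}$ of the communicating-class subdiagrams (every infinite path eventually stays in one class, since there are finitely many classes and the class of $r(x_n)$ moves monotonically along the access order); use classical Perron--Frobenius on the primitive block to pin down the unique measure on $X_{\ov B_\alpha}$; observe that finiteness of the extension \eqref{extension_method} is equivalent to $\rho_\beta<\lambda_\alpha$ for all $\beta\succ\alpha$, i.e.\ to $\alpha$ being distinguished; and invoke the Frobenius--Victory/Schneider theory to match the resulting cylinder values with the distinguished eigenvector. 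The only step you should spell out more carefully is the ``back-propagation'': from $A\ov p^{(n+1)}=\ov p^{(n)}$ one gets $\ov p^{(n)}|_{\mathcal E_\beta}=A_\beta^m\,\ov p^{(n+m)}|_{\mathcal E_\beta}+\sum_{j<m}A_\beta^j(\cdots)$, and to conclude the self-similar form $\ov p^{(n)}=\lambda_\alpha^{-(n-1)}\xi_\alpha$ you must show the boundary term $A_\beta^m\,\ov p^{(n+m)}|_{\mathcal E_\beta}$ tends to $0$; this follows because that term measures the paths staying in class $\beta$ for $m$ more levels and $\mu(\wh X_{\ov B_\beta})=0$ for $\beta\neq\alpha$, but it is not a consequence of the eigenvector classification alone. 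With that addition your sketch is a faithful reconstruction of the cited proof.
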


\subsection{Invariant measures on stationary irreducible generalized Bratteli diagrams}

%correct in big paper BJ22 instead of BJ21
In \cite{BezuglyiJorgensen2022, Bezuglyi_Jorgensen_Karpel_Sanadhya_2023}, the authors used the Perron-Frobenius theory for infinite matrices to describe tail 
invariant measures on the path space of a class of irreducible stationary generalized Bratteli diagrams. In particular, it was shown that 

\begin{theorem} [\cite{Bezuglyi_Jorgensen_Karpel_Sanadhya_2023}] 
\label{Thm:inv1} 
Let $B(F) = B(V,E,>)$ be an ordered stationary generalized 
Bratteli diagram 
such that the matrix $A = F^T$ is infinite, irreducible, aperiodic, 
and positive recurrent. Let $\xi = (\xi_v)$ be a Perron-Frobenius 
right eigenvector for $A$ such that 
$\sum_{u\in V_0} \xi_u = 1$. Then the measure $\mu$ given by the formula
 $$
 \mu([\ol e(w, v)]) = \frac{\xi_v}{\lambda^{n}}, 
 $$
 where $[\ol e(w, v)]$ is the cylinder set which corresponds to a finite path $\ol e(w, v)$ that begins at $w \in V_0$ and ends at $v \in 
    V_n$, $n \in \N$,
is the unique probability  
$\varphi_B$-invariant measure that takes positive values 
on cylinder sets. 
\end{theorem}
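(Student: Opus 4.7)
The plan is to build $\mu$ directly from the Perron--Frobenius eigenvector via Theorem \ref{BKMS_measures=invlimits}, and then extract uniqueness from the Perron--Frobenius theory for positive $\lambda$-recurrent infinite matrices.

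For existence, set $\ol p^{(n)} := \lambda^{-n}\xi$ for every $n \in \N_0$. Since $A = F^T$, $A\xi = \lambda\xi$, and the diagram is stationary,
\begin{equation*}
F^T \ol p^{(n+1)} \;=\; \lambda^{-(n+1)} A\xi \;=\; \lambda^{-n}\xi \;=\; \ol p^{(n)},
\end{equation*}
so Theorem \ref{BKMS_measures=invlimits}(2) produces a tail-invariant Borel measure $\mu$ with $\mu([\ol e(w,v)]) = \xi_v/\lambda^n$ for every cylinder ending at a vertex $v \in V_n$. Using $H^{(0)}_u = 1$ and the hypothesis $\sum_{u\in V_0}\xi_u = 1$, the total mass is $\mu(X_B) = \sum_{u\in V_0}\xi_u = 1$, and positive $\lambda$-recurrence forces $\xi$ to be strictly positive, so $\mu$ is positive on cylinders. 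Tail invariance upgrades to $\varphi_B$-invariance because $\mu(X_{\max}) = \mu(X_{\min}) = 0$, as noted in the discussion following Definition \ref{Def:VershikMap}.

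For uniqueness, let $\nu$ be any probability $\varphi_B$-invariant measure that is positive on every cylinder set. Since Vershik invariance implies tail invariance, Theorem \ref{BKMS_measures=invlimits}(1) attaches to $\nu$ a sequence of strictly positive vectors $(\ol q^{(n)})_{n\in\N_0}$ with $A\,\ol q^{(n+1)} = \ol q^{(n)}$ for all $n$. Iterating gives $\ol q^{(0)} = A^n \ol q^{(n)}$ for every $n$. The target is to conclude $\ol q^{(n)} = \lambda^{-n}\xi$; the probability normalization $\sum_{u\in V_0}\ol q^{(0)}_u = 1$ then pins down the scalar and forces $\nu = \mu$.

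The hard part, and the main obstacle I anticipate, is that only the one-sided tower relation $A\,\ol q^{(n+1)} = \ol q^{(n)}$ is available rather than an eigenvalue equation, and no finite-dimensional Perron--Frobenius argument applies. To handle this I would invoke the Vere-Jones theory of $\lambda$-recurrent matrices: under irreducibility, aperiodicity, and positive $\lambda$-recurrence of $A$, the Perron eigenvector $\xi$ is the unique (up to positive scalar) strictly positive vector admitting a compatible sequence of strictly positive $(A/\lambda)$-preimages of every order, since the ratio limit theorem $\lambda^{-n}(A^n)_{vw} \to \xi_v\pi_w/\langle\pi,\xi\rangle$ (with $\pi$ the left Perron eigenvector) combined with the positivity and consistency of $(\ol q^{(n)})$ identifies $\ol q^{(0)}$ as a positive multiple of $\xi$. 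Propagating through the recursion yields $\ol q^{(n)} = c\lambda^{-n}\xi$, and the probability constraint fixes $c = 1$, completing the proof.
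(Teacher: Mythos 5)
First, a point of comparison: this paper does not prove Theorem \ref{Thm:inv1} at all --- it is quoted from \cite{Bezuglyi_Jorgensen_Karpel_Sanadhya_2023} and used as a black box --- so there is no in-text proof to measure your argument against, and I can only assess it on its own terms.

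Your existence half is essentially right and standard: $\ol p^{(n)} = \lambda^{-n}\xi$ satisfies $A\,\ol p^{(n+1)} = \ol p^{(n)}$, Theorem \ref{BKMS_measures=invlimits}(2) yields the tail-invariant measure, irreducibility forces $\xi$ to be strictly positive, and the normalization gives total mass $1$. One assertion is unproved: you claim $\mu(X_{\max})=\mu(X_{\min})=0$ by citing the remark after Definition \ref{Def:VershikMap}, but that remark only says that \emph{if} these sets are null then tail-invariance and $\varphi_B$-invariance can be identified; it does not establish nullity. You would need an estimate such as $\mu(X_{\max}) \le \sum_{v\in V_n}\xi_v/\lambda^{n}$ (each tower $X_v^{(n)}$ contains at most one maximal initial segment of length $n$), together with control on $\lambda$ and $\sum_v \xi_v$, to close this.

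The genuine gap is in uniqueness, and it sits exactly where you flag the difficulty and then stop. From $A\,\ol q^{(n+1)}=\ol q^{(n)}$ you must deduce $\ol q^{(0)} = c\,\xi$. The Vere-Jones ratio limit $\lambda^{-n}(A^n)_{vw}\to \xi_v\pi_w/\langle\pi,\xi\rangle$ is a pointwise statement in $w$, whereas $\ol q^{(0)}_v=\sum_w (A^n)_{vw}\,\ol q^{(n)}_w$ is an infinite sum whose summands change with $n$; you cannot simply ``combine'' the ratio limit with positivity, because the interchange of $n\to\infty$ with the sum over $w$ is precisely the step that distinguishes the infinite-alphabet case from the finite one. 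The standard way to execute your plan is: use $(A^n)_{vw}\,\ol q^{(n)}_w\le \ol q^{(0)}_v$ to bound $\lambda^n\ol q^{(n)}_w$ uniformly in $n$; extract a pointwise subsequential limit by a diagonal argument; apply Fatou to the relation $A(\lambda^{n+1}\ol q^{(n+1)})=\lambda\cdot\lambda^n\ol q^{(n)}$ to obtain a nonnegative $\lambda$-subinvariant vector; and then invoke the Vere-Jones theorem that for an $R$-recurrent irreducible matrix every nonnegative $\lambda$-subinvariant vector is $\lambda$-invariant and unique up to scale, before propagating back to show the whole sequence converges to $c\lambda^{-n}\xi$. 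Your appeal to ``Vere-Jones theory'' points at the right toolbox, but as written the decisive limit-interchange argument is absent, so the uniqueness half is a sketch of a strategy rather than a proof.
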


%----------------------------------------------
%------------  MAIN RESULTS -------------------
%----------------------------------------------

\section{Measure extension for generalized Bratteli diagrams}\label{Sect:meas_ext_general}

In this section, we consider the procedure of a measure extension from a vertex subdiagram for generalized Bratteli diagrams. The proof is essentially the same as in \cite{AdamskaBezuglyiKarpelKwiatkowski2017}.

\begin{theorem}\label{Thm:Meas-ext-general} Let $B$ be a generalized Bratteli diagram $B =(V, E)$ with
incidence matrices $(F_n)$. Let $\ol B$ be a (standard or generalized) vertex subdiagram of $B$ determined
by a sequence $(W_n)$ of proper subsets $W_n \subset V_n$ for each $n \in \N$.
Let $\ol \mu $ be a probability measure on the path space
$X_{\ol B}$ of $\ol B$. 
Then 

\begin{equation}\label{Eq:mu_ext}
\widehat{\ol{\mu}}(\wh{X}_{\overline{B}}) = 1 + \sum_{n = 0}^{\infty}\sum_{v \in W_{n + 1}}
\sum_{w\in {W}'_{n}} {f}_{vw}^{(n)}  H_{w}^{(n)} 
\ol p_v^{(n+1)},
\end{equation}
where $W_{n}^{'} = V_{n} \setminus W_{n},\ n = 0, 1, 2, \ldots$.

In particular, the following statements are equivalent:

\noindent 
(i) 
$\displaystyle
\widehat{\ol{\mu}}(\wh{X}_{\overline{B}}) < \infty,$ 

\noindent
(ii) 
$\displaystyle
\sum_{n = 0}^{\infty}\sum_{v \in W_{n + 1}}
\sum_{w\in {W}'_{n}} {f}_{vw}^{(n)}  H_{w}^{(n)} 
\ol p_v^{(n+1)} < \infty. $

\end{theorem}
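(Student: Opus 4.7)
The plan is to exploit the filtration $\widehat{X}_{\overline{B}}^{(0)} \subset \widehat{X}_{\overline{B}}^{(1)} \subset \cdots$ from the excerpt together with the already-stated identity
\[
\widehat{\overline{\mu}}\big(\widehat{X}_{\overline{B}}^{(n)}\big) = \sum_{w \in W_n} H_w^{(n)} \overline{p}_w^{(n)}
\]
extracted from equation (\ref{extension_method}). First I would observe that $\widehat{\overline{\mu}}(\widehat{X}_{\overline{B}}^{(0)}) = \overline{\mu}(X_{\overline{B}}) = 1$, and telescope:
\[
\widehat{\overline{\mu}}(\widehat{X}_{\overline{B}}) = 1 + \sum_{n=0}^{\infty} \Big[\widehat{\overline{\mu}}\big(\widehat{X}_{\overline{B}}^{(n+1)}\big) - \widehat{\overline{\mu}}\big(\widehat{X}_{\overline{B}}^{(n)}\big)\Big].
\]
The remainder of the argument amounts to showing that the $n$-th difference here matches the $n$-th term of the series in (\ref{Eq:mu_ext}).

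For that step I would expand $H_v^{(n+1)} = \sum_{u \in V_n} f_{vu}^{(n)} H_u^{(n)}$ --- a sum that is finite in $u$ because $|r^{-1}(v)| < \infty$ --- and split along the partition $V_n = W_n \sqcup W'_n$ inside the formula for $\widehat{\overline{\mu}}(\widehat{X}_{\overline{B}}^{(n+1)})$. In the $W_n$ part, interchanging the order of summation (justified by Tonelli, since all terms are non-negative) and then applying the tail-invariance relation $\overline{p}_u^{(n)} = \sum_{v \in W_{n+1}} \overline{f}_{vu}^{(n)} \overline{p}_v^{(n+1)}$ supplied by Theorem~\ref{BKMS_measures=invlimits} collapses the $W_n$ contribution to exactly $\sum_{u \in W_n} H_u^{(n)} \overline{p}_u^{(n)} = \widehat{\overline{\mu}}(\widehat{X}_{\overline{B}}^{(n)})$. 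Subtracting leaves precisely $\sum_{v \in W_{n+1}} \sum_{u \in W'_n} f_{vu}^{(n)} H_u^{(n)} \overline{p}_v^{(n+1)}$, which is the $n$-th summand on the right-hand side of (\ref{Eq:mu_ext}).

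Substituting back into the telescoping sum and letting the upper index tend to $\infty$ (monotone convergence handles the limit, since $\widehat{X}_{\overline{B}} = \bigcup_n \widehat{X}_{\overline{B}}^{(n)}$) yields the required identity. The equivalence of (i) and (ii) is then immediate, because the entire series in (\ref{Eq:mu_ext}) equals exactly $\widehat{\overline{\mu}}(\widehat{X}_{\overline{B}}) - 1$.

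The main obstacle I anticipate is purely bookkeeping: one must justify that the sums $\sum_{v \in W_{n+1}}$ and $\sum_{u \in W_n}$, both of which range over possibly infinite index sets for generalized diagrams, can be swapped freely and that the invariance equation $\overline{F}_n^T \overline{p}^{(n+1)} = \overline{p}^{(n)}$ can be applied termwise even when $W_{n+1}$ is infinite. Tonelli's theorem takes care of both issues thanks to non-negativity, and the finite-predecessor condition (Definition~\ref{Def:generalized_BD}(iii)) ensures that each $H_v^{(n+1)}$ is itself a finite sum over $V_n$. Beyond this, the computation is purely algebraic and parallels the argument for standard diagrams in \cite{AdamskaBezuglyiKarpelKwiatkowski2017}.
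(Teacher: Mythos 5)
Your proposal is correct and follows essentially the same route as the paper's proof: expand $H_v^{(n+1)}=\sum_{w\in V_n}f_{vw}^{(n)}H_w^{(n)}$, split $V_n=W_n\sqcup W_n'$, use the subdiagram invariance relation $\sum_{v\in W_{n+1}}\ol f_{vw}^{(n)}\ol p_v^{(n+1)}=\ol p_w^{(n)}$ to collapse the $W_n$ part back to $\wh{\ol\mu}(\wh X_{\ol B}^{(n)})$, and telescope. The extra justifications you supply (Tonelli for the interchange, monotone convergence for the limit, the finite-predecessor condition making $H_v^{(n+1)}$ a finite sum) are left implicit in the paper but are exactly the right points to flag.
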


\begin{proof} To prove the theorem, fix $n$ and begin with equality \eqref{extension_method}. We have
$$
\ba
\wh{\overline{\mu}}( \wh{X}_{\overline{B}}^{(n + 1)}) = &\ 
\sum_{v \in W_{n + 1}}
\ol p_v^{(n+1)}  H_{v}^{(n + 1)} = 
\sum_{v \in W_{n + 1}} \ol p_v^{(n+1)} 
 \sum_{w \in V_{n}} {f}_{vw}^{(n)} H_{w}^{(n)} \\
= &\
\sum_{v \in W_{n + 1}} \ol p_v^{(n+1)} 
\sum_{w \in W_{n}} {f}_{vw}^{(n)} H_{w}^{(n)} + 
\sum_{v \in W_{n + 1}} \ol p_v^{(n+1)} 
\sum_{w \in W'_{n}} {f}_{vw}^{(n)} H_{w}^{(n)} \\
=& \ 
\sum_{w \in W_{n}}  H_{w}^{(n)} \sum_{v \in W_{n + 1}} \ol {f}_{vw}^{(n)} \ol p_v^{(n+1)}  + 
\sum_{v \in W_{n + 1}} 
\sum_{w \in W'_{n}} {f}_{vw}^{(n)}\ol p_v^{(n+1)}H_{w}^{(n)} \\
= &\ 
\sum_{w \in W_{n}}  H_{w}^{(n)} \ol p_w^{(n)} +
\sum_{v \in W_{n + 1}} 
\sum_{w \in W'_{n}} {f}_{vw}^{(n)}\ol p_v^{(n+1)} H_{w}^{(n)} \\
=&\  
\wh{\overline{\mu}} (\wh{X}_{\overline{B}}^{(n)}) +
\sum_{v \in W_{n + 1}} 
\sum_{w \in W'_{n}} {f}_{vw}^{(n)}\ol p_v^{(n+1)}H_{w}^{(n)}. 
\ea 
$$
Since
$$
\wh{\overline{\mu}}(\wh{X}_{\overline{B}})
= 1 + \sum_{n\geq 0} \left(\wh{\overline{\mu}} 
(\wh{X}_{\overline{B}}^{(n+1)})  - \wh{\overline{\mu}} 
(\wh{X}_{\overline{B}}^{(n)})\right), 
$$ 
we get the result.
\qedhere \end{proof}

%-------------------------------------------------
%----------------------DIO------------------------
%-------------------------------------------------

\begin{remark}
Note that we can also compute the measure $\wh {\ov \mu}$ of any cylinder set $[\ov e]$ by carefully examining which cylinder subsets that end in the vertices of $\ov B$ are contained in $[\ov e]$. The sum of measures of these subsets will give us the measure of $[\ov e]$. In Section~\ref{Sect:Examples_DIO}, we present such computations for concrete examples of Bratteli diagrams.
\end{remark}

\section{Reducible Bratteli diagrams with infinitely many odometers}\label{Sect:DIO_meas_general}

Consider the following class of non-stationary 
reducible generalized Bratteli diagrams. In this section, we will focus on the study of their tail invariant measures. Every diagram in this class contains infinitely many odometers as subdiagrams, which are connected by single edges.

Let the Bratteli diagram $B = B_{IO}$ be defined by the sequence of incidence matrices 
\be\label{eq-m-x nostat DIO}
{F}_{n} =
\begin{pmatrix}
 a_{n}^{(1)} &      1 &      0 &      0 & \ldots & \ldots & \ldots & \ldots  \\
     0 &  a_{n}^{(2)} &      1 &      0 &      0 & \ldots & \ldots & \ldots  \\
     0 &      0 &  a_{n}^{(3)} &      1 &      0 &      0 & \ldots & \ldots  \\
     0 &      0 &      0 &  a_{n}^{(4)} &      1 &      0 &      0 & \ldots  \\
\ldots & \ldots & \ldots & \ldots & \ldots & \ldots & \ldots & \ldots  \\
\end{pmatrix},\qquad n \in \N_0,
\ee
where the natural numbers $a_{n}^{(i)} \geq 2$ for all $n \in \mathbb{N}_0$ and $i \in \mathbb{N}$.
The index $n$ points out at the $n$-th level of the diagram $B$,
and $i$ corresponds to the number of a vertex inside $V_n$.

The diagram $B_{IO}$ has a natural set of elementary vertex subdiagrams $\ol B_i$ consisting of vertical odometers where $i$ runs over the set $\N$. There are exactly $a_n^{(i)}$ edges connecting the vertices $i \in V_n$ and $i \in V_{n+1}$. 
We call $B$ the ``diagram of infinite odometers (DIO)''. 

Analyzing the paths space of (DIO) it is not hard to prove the following claim:

\begin{claim}\label{r2.24} 
The sets $\wh X_{\ov B_i}$, $i = 1, 2, \ldots$,
are pairwise disjoint and
$X_{B}= \bigsqcup_{i = 1}^{\infty}\wh X_{\ov B_i}$, where $X_{B}$ is the set of all infinite paths of (DIO).
\end{claim}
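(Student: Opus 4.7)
The plan is to exploit the bidiagonal shape of $F_n$ to show that along any infinite path in $X_B$ the sequence of vertex indices is weakly decreasing in $\N$, hence eventually constant; then each path belongs to $\wh X_{\ov B_i}$ for the unique index $i$ at which it stabilizes, and pairwise disjointness becomes automatic.

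First I would unpack the matrix: row $v$ of $F_n$ has exactly two nonzero entries, $a_n^{(v)}$ in column $v$ and $1$ in column $v+1$. Translating back to edges, this says that from a vertex $w = i \in V_n$ there are $a_n^{(i)}$ edges going to $i \in V_{n+1}$ and (when $i \geq 2$) exactly one edge going to $i-1 \in V_{n+1}$, with no edges going anywhere else. Consequently, for any $x = (x_n) \in X_B$, the sequence $i_n := s(x_n)$ satisfies $i_{n+1} \in \{i_n, i_n - 1\}$, so $(i_n)$ is non-increasing in $\N$. Being bounded below by $1$, it is eventually constant: there exist $N \in \N_0$ and $i \in \N$ with $s(x_n) = i$ for every $n \geq N$, whence $r(x_n) = i$ for every $n \geq N$.

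Next I would verify that such an $x$ indeed lies in $\wh X_{\ov B_i}$. Since $a_m^{(i)} \geq 2$ for each $m < N$, I can prepend to the tail $(x_N, x_{N+1}, \ldots)$ any path from $i \in V_0$ to $i \in V_N$ that stays at vertex $i$ throughout; the resulting infinite path lies in $X_{\ov B_i}$ and is tail-equivalent to $x$. Hence $x \in \wh X_{\ov B_i}^{(N)} \subset \wh X_{\ov B_i}$ by \eqref{n-th level}, proving $X_B = \bigcup_{i=1}^{\infty} \wh X_{\ov B_i}$. For disjointness, if $x \in \wh X_{\ov B_i} \cap \wh X_{\ov B_j}$, then by the above characterization $s(x_n)$ must eventually equal both $i$ and $j$, forcing $i=j$.

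I do not anticipate a substantive obstacle: the claim is essentially a direct read-off of the bidiagonal support pattern of $F_n$ together with the elementary fact that a non-increasing sequence of positive integers must stabilize. The only point requiring minor care is exhibiting a tail-equivalent path that actually lives inside the odometer $\ov B_i$, which uses the hypothesis $a_m^{(i)} \geq 2$ to guarantee the existence of a connecting prefix within $\ov B_i$.
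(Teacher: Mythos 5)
Your proof is correct, and it fills in exactly the argument the paper leaves implicit: the authors state Claim~\ref{r2.24} without proof (``it is not hard to prove''), and the intended reasoning is precisely your observation that the bidiagonal support of $F_n$ forces the vertex index $s(x_n)$ along any path to be non-increasing, hence eventually constant at some $i$, after which tail-equivalence to a vertical path in $X_{\ov B_i}$ (which exists since $a_m^{(i)}\geq 1$) gives both the covering and, via the converse characterization, the disjointness. No gaps.
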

 
 The subdiagram $\ol B_i$ 
of $B$ admits a unique tail invariant probability measure 
$\ol \mu_i$ on the path space $X_{\ol B_i}$ such that for a 
cylinder set $[\ol e] = [e_0, \ldots, e_n], s(e_j) = r(e_j) =i$, we have
$$
\ol \mu_i ([\ol e]) = \frac{1}{a^{(i)}_0\ \cdots\ 
a^{(i)}_n}.
$$
The measure extension procedure applied to $\ol B_i$ 
gives us the measure $\wh{\ol \mu}_i$ on the tail invariant set 
$\wh X_{\ol B_i}$. It follows from Theorem 
\ref{Thm:Meas-ext-general} that 
$$
\wh{\ol \mu}_i(\wh X_{\ol B_i}) < \infty \ 
\Longleftrightarrow \ \sum_{n=0}^\infty \frac{H^{(n)}_{i+1}}
{a^{(i)}_0\ \cdots\ a^{(i)}_n} < \infty.
$$
Thus, it follows from the construction of $B_{IO}$ 
that there are infinitely many ergodic measures 
$\wh{\ol \mu}_i$ on the path space $X_B$. Some of them may
be finite, while others are infinite. We will give exact examples
below. Moreover, the measures  $\wh{\ol \mu}_i$ 
and $\wh{\ol \mu}_j$ are 
mutually singular ($i \neq j)$ because they are supported by 
non-intersecting tail invariant sets $\wh X_{\ov B_i}$ and $\wh X_{\ov B_j}$ (see Claim~\ref{r2.24}). 

Our goal is to show that there are no other ergodic measures. 

\begin{remark}\label{rem-theta vs mu}
Let $\theta$ be a finite tail invariant measure on 
$\wh X_{\ol B_i}$. Then there is a constant $C > 0$ such that 
$\theta = C \wh{\ol \mu}_i$. 

Indeed, let $C = \theta(X_{\ol B_i})$, then $\ol \theta :=
\theta|_{X_{\ol B_i}}= C \ol \mu_i$ by uniqueness of $\ov \mu_i$ on $X_{\ov B_i}$. By tail invariance, for every $n \in \mathbb{N}_0$ and every cylinder set $[e_0, \ldots, e_n]$ such that $r(e_n) = i \in V_{n+1}$ we have
$$
\theta([e_0, \ldots, e_n]) =  \frac{C}{a^{(i)}_0\ \cdots\ 
a^{(i)}_n} = C \wh{\ol \mu_i}([e_0, \ldots, e_n]).
$$
\end{remark}

\begin{thm}\label{thm:All-erg-inv-meas-DIO}
Let $\mc M$ be the family of measures obtained by normalization of measures $\wh{\ol\mu}_i$ 
such that $\wh{\ol \mu}_i(\wh X_{\ol B_i}) < \infty$. 
Then $\mc M$ coincides with the set of all ergodic probability 
tail invariant measures on the path space $X_B$ of the diagram 
$B$. 
\end{thm}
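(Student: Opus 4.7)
The plan is to combine Claim \ref{r2.24}, Remark \ref{rem-theta vs mu}, and an ergodicity argument against the countable tail invariant partition $X_B = \bigsqcup_i \wh X_{\ol B_i}$. The classification on $X_B$ reduces to the already-understood picture on each saturation $\wh X_{\ol B_i}$: every ergodic probability measure must concentrate on a single piece of the partition, and on that piece Remark \ref{rem-theta vs mu} pins it down to a scalar multiple of $\wh{\ol\mu}_i$. Both inclusions are then essentially forced.

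For the inclusion $\mc M \subseteq \{\text{prob.\ erg.\ tail inv.\ measures}\}$, I would fix $i$ with $\wh{\ol\mu}_i(\wh X_{\ol B_i}) < \infty$ and take $\nu_i := \wh{\ol\mu}_i/\wh{\ol\mu}_i(\wh X_{\ol B_i})$; this is a tail invariant probability measure by construction of the extension in Section \ref{subs subdiagrams}. To get ergodicity, I would take a Borel tail invariant set $Y \subset X_B$ and note that $Y \cap X_{\ol B_i}$ is a tail invariant subset of the odometer $X_{\ol B_i}$, hence of $\ol\mu_i$-measure $0$ or $1$ by uniqueness of the invariant measure on an odometer. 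Since $\wh X_{\ol B_i} = \mathcal R(X_{\ol B_i})$ and $Y$ is $\mathcal R$-invariant, one has $Y \cap \wh X_{\ol B_i} = \mathcal R(Y \cap X_{\ol B_i})$, and the extension procedure sends null (respectively full) sets in $X_{\ol B_i}$ to null (respectively full) sets in $\wh X_{\ol B_i}$; so $\nu_i(Y) = \nu_i(Y \cap \wh X_{\ol B_i}) \in \{0,1\}$. For the reverse inclusion, let $\mu$ be a probability ergodic tail invariant measure on $X_B$. Claim \ref{r2.24} gives the Borel partition into tail invariant sets $\wh X_{\ol B_i}$, so ergodicity forces $\mu(\wh X_{\ol B_i}) \in \{0,1\}$, and $\sum_i \mu(\wh X_{\ol B_i}) = 1$ selects a unique $i_0$ with $\mu(\wh X_{\ol B_{i_0}}) = 1$. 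Restricting to $\wh X_{\ol B_{i_0}}$ and applying Remark \ref{rem-theta vs mu} to $\theta := \mu|_{\wh X_{\ol B_{i_0}}}$ yields $\mu = C\,\wh{\ol\mu}_{i_0}$ for some $C > 0$; the probability condition forces $\wh{\ol\mu}_{i_0}(\wh X_{\ol B_{i_0}}) = 1/C < \infty$, which places $\mu = \nu_{i_0}$ in $\mc M$.

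The main obstacle I anticipate is checking that the constant $C$ produced by Remark \ref{rem-theta vs mu} is strictly positive in our application, i.e.\ that $\mu(X_{\ol B_{i_0}}) > 0$ whenever $\mu(\wh X_{\ol B_{i_0}}) = 1$. This is not immediate because $X_{\ol B_{i_0}}$ is a proper subset of $\wh X_{\ol B_{i_0}}$. The clean way to dispose of it is to note that $\wh X_{\ol B_{i_0}} = \mathcal R(X_{\ol B_{i_0}})$ is the countable saturation under the countable Borel equivalence relation $\mathcal R$, and tail invariance of $\mu$ on cylinders upgrades to the statement that $\mu(\mathcal R(A)) = 0$ whenever $\mu(A) = 0$ (the saturation being a countable union of measure-preserving Borel images of $A$); so $\mu(X_{\ol B_{i_0}}) = 0$ would imply $\mu(\wh X_{\ol B_{i_0}}) = 0$, contradicting the choice of $i_0$. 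Aside from this positivity check and the ergodicity argument in the first inclusion, the rest is a direct assembly of the preceding results.
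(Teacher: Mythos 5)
Your proof is correct and follows essentially the same route as the paper: partition $X_B$ into the tail invariant sets $\wh X_{\ol B_i}$ via Claim \ref{r2.24} and pin down the restriction to each piece using Remark \ref{rem-theta vs mu}. You supply more explicit detail than the paper does on the ergodicity of the normalized extensions and on the positivity of the constant $C$ (via null sets having null saturations), both of which the paper's proof leaves implicit.
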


\begin{proof}
We first note that $X_B $ can be partitioned into the union of
tail invariant sets $\wh X_{\ol B_i}$. Every 
$\wh X_{\ol B_i}$ is an $F_\sigma$-set, hence it is Borel. 
Let $\theta$ be a finite tail invariant measure on $X_B$.
The support of $\theta$ is the union of some sets $\wh X_{\ol B_i}$. Then $\theta(\wh X_{\ol B_i}) < \infty$ for all 
such $i$'s. Let $\theta_i$ be the measure $\theta$ restricted
to the set $\wh X_{\ol B_i}$. By Remark \ref{rem-theta vs mu}, every $\theta_i$ is proportional to $\wh{\ol \mu}_i$, i.e.,
$\theta_i = C_i\wh{\ol \mu}_i$. We observe that $\sum_i C_i =
\theta(X_B)$. This means that $\theta$ is a 
linear combination of ergodic measures $\wh{\ol \mu}_i$.
\end{proof}

\begin{remark}
The same approach works in the case when the vertical
odometers are replaced with simple stationary 
standard Bratteli diagrams $\ol B_i$. As for odometers, we will 
have a unique ergodic probability measure $\ol\mu_i$
on the path space $X_{\ol B_i}$ defined in  
Theorem \ref{thm-BKMS2010}. Assuming that the extension
$\wh{\ol\mu}_i(\wh X_{\ol B_i})$ is finite, we get
that this measure is unique (up to
a constant). The same reasoning as in the proof of Theorem 
\ref{thm:All-erg-inv-meas-DIO} can be repeated. 
\end{remark}

\section{Some classes of generalized Bratteli diagrams with infinitely many odometers}\label{Sect:Examples_DIO}

In this section, we present some classes of stationary and non-stationary reducible generalized Bratteli diagrams with infinitely many odometers and find their sets of probability ergodic invariant measures. We give examples of diagrams that: (i) have a unique probability ergodic invariant measure, (ii) have countably many probability ergodic invariant measures, (iii) have no probability invariant measure, but possess an infinite $\sigma$-finite invariant measure that takes finite values on all cylinder sets, and  (iv) have no invariant measure that takes finite values on all cylinder sets.

\subsection{Stationary generalized Bratteli diagrams with infinitely many odometers.}
In this subsection, we describe all probability ergodic invariant measures for a class of stationary generalized Bratteli diagrams. We apply two 
methods: the construction of measure extension and the procedure of obtaining a measure from a positive eigenvector and eigenvalue. We show that these two 
approaches lead to the same ergodic invariant measures. These procedures also allow us 
to obtain infinite $\sigma$-finite ergodic invariant measures.

Theorems~\ref{Thm:measures_Exk}, \ref{Thm:Dio_k_eigenvectors}, and \ref{Thm:DIO_a_i_all_inv_m} show that for stationary reducible generalized Bratteli diagrams, unlike the case of standard diagrams, if a class of vertices is distinguished it doesn't necessarily mean that the corresponding ergodic invariant measure is finite. We will use the notation introduced in Section~\ref{Sect:DIO_meas_general}.

\begin{thm}\label{Thm:measures_Exk}
    Let $B$ be a stationary generalized Bratteli diagram
 with incidence matrix
 $$
F = \begin{pmatrix}
    a & 1 & 0 & 0 & 0 &\ldots\\
    0 & a-k & 1 & 0 & 0 &\ldots\\
    0 & 0 & a-k & 1 & 0 &\ldots\\
    0 & 0 & 0 & a-k & 1 &\ldots\\
    \vdots & \vdots & \vdots & \vdots & \vdots &\ddots
\end{pmatrix},
 $$
 where $a, k \in \mathbb{N}$ and $a - k > 1$. 
 Then there is a unique probability ergodic invariant measure $\mu$ on $B$ if and only if $k > 1$. 
 If $k = 1$ then there are no probability invariant measures on $B$. 
 
\end{thm}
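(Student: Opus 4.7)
The plan is to apply Theorem~\ref{thm:All-erg-inv-meas-DIO}, which reduces the classification of probability ergodic tail invariant measures on $X_B$ to the determination of those indices $i$ for which the extension $\wh{\ol\mu}_i$ of the canonical measure on the vertical odometer $\ol B_i$ has finite mass. Applying Theorem~\ref{Thm:Meas-ext-general} with $W_n=\{i\}$, the only contribution to the double sum comes from the single entry $f^{(n)}_{i,i+1}=1$, so
$$
\wh{\ol\mu}_i(\wh X_{\ol B_i}) = 1+\sum_{n=0}^{\infty}\frac{H^{(n)}_{i+1}}{a^{(i)}_0\cdots a^{(i)}_n}.
$$
The problem thus reduces to computing the tower heights $H^{(n)}_j$ and analyzing the resulting series for each $i$.

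The central computation is to show that $H^{(n)}_j=(a-k+1)^n$ for every $j\geq 2$. From $H^{(n+1)}_v=\sum_w f^{(n)}_{v,w}H^{(n)}_w$ and the shape of $F$, the recurrence reads
$$
H^{(n+1)}_1 = a H^{(n)}_1 + H^{(n)}_2,\qquad H^{(n+1)}_j=(a-k)H^{(n)}_j+H^{(n)}_{j+1} \quad (j\geq 2),
$$
together with $H^{(0)}_j=1$ for all $j$. Since both the recurrence for $j\geq 2$ and the initial data are invariant under the shift $j\mapsto j+1$ on $\{2,3,\ldots\}$, a straightforward induction on $n$ yields that $H^{(n)}_j$ depends only on $n$ for $j\geq 2$; denoting this common value $\beta_n$, the recurrence collapses to $\beta_{n+1}=(a-k+1)\beta_n$, giving $\beta_n=(a-k+1)^n$.

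Plugging these heights into the extension formula produces two geometric series. For $i=1$, the common ratio is $(a-k+1)/a$, which is strictly less than $1$ precisely when $k>1$; in that case $\wh{\ol\mu}_1$ is finite and, after normalization, gives a probability ergodic invariant measure on $X_B$. For $i\geq 2$, the common ratio is $(a-k+1)/(a-k)>1$, so $\wh{\ol\mu}_i(\wh X_{\ol B_i})=\infty$ for every such $i$, irrespective of the value of $k$. Invoking Theorem~\ref{thm:All-erg-inv-meas-DIO} once more, when $k>1$ the normalized $\wh{\ol\mu}_1$ is the unique probability ergodic invariant measure on $X_B$, while for $k=1$ no $\wh{\ol\mu}_i$ has finite mass, so no probability invariant measure on $X_B$ exists at all.

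I do not anticipate any serious obstacle; the only point that requires a moment of thought is the translation-invariance of the height recurrence on the tail $\{j\geq 2\}$, which collapses the vertex-indexed heights to a single scalar sequence and turns both relevant series into geometric ones.
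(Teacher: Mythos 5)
Your proposal is correct and follows essentially the same route as the paper: the same height computation $H^{(n)}_j=(a-k+1)^n$ for $j\geq 2$ (the paper's Lemma~\ref{Lemma:Heights_Exk}, also proved by induction using shift-invariance of the recurrence), the same reduction via Theorem~\ref{Thm:Meas-ext-general} to the geometric series with ratios $(a-k+1)/a$ for $i=1$ and $(a-k+1)/(a-k)$ for $i\geq 2$, and the same appeal to Theorem~\ref{thm:All-erg-inv-meas-DIO} to conclude. No gaps.
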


To prove the theorem we will need the following lemma:

\begin{lemma}\label{Lemma:Heights_Exk}
    For every $n \in \mathbb{N}_0$, we have
\begin{equation}\label{Eq_Heights_Exk}
   H_i^{(n)} = (a - (k-1))^n \mbox{ for } i > 1.
\end{equation}
\end{lemma}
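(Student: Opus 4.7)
The plan is to set up the recurrence for the heights $H^{(n)}_i$ directly from the shift-plus-diagonal structure of $F$ and then close the formula by induction on $n$.

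First I would dispose of the base case: by definition $H^{(0)}_w = 1$ for every $w$, and $(a-(k-1))^0 = 1$, so \eqref{Eq_Heights_Exk} holds at $n=0$. Next I would read the recurrence off the matrix. For every $i \geq 2$ the $i$-th row of $F$ has only two nonzero entries, $f^{(n)}_{i,i} = a-k$ and $f^{(n)}_{i,i+1} = 1$, so the general identity $H^{(n+1)}_v = \sum_w f^{(n)}_{v,w}\, H^{(n)}_w$ specializes to
$$H^{(n+1)}_i \;=\; (a-k)\, H^{(n)}_i + H^{(n)}_{i+1}, \qquad i \geq 2.$$

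Assuming inductively that $H^{(n)}_j = (a-k+1)^n$ for every $j \geq 2$, both heights on the right-hand side equal $(a-k+1)^n$, so
$$H^{(n+1)}_i \;=\; (a-k)(a-k+1)^n + (a-k+1)^n \;=\; (a-k+1)^{n+1}$$
for every $i \geq 2$, which completes the induction.

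The only conceptual point worth flagging, and what I take to be the only potential obstacle, is that the claimed value is \emph{independent of the vertex label} $i$. This uniformity is exactly what makes the induction close: the recurrence couples $H^{(n)}_i$ with $H^{(n)}_{i+1}$, and were the desired formula to depend on $i$ one would be forced to analyze an infinite system of coupled recurrences instead of a single scalar one. The vertex $i=1$ is excluded because its recurrence reads $H^{(n+1)}_1 = a\, H^{(n)}_1 + H^{(n)}_2$, with leading coefficient $a$ in place of $a-k$, and no such uniform closed form is available there; it is, however, not needed for the subsequent application in Theorem~\ref{Thm:measures_Exk}, where only the growth of the tower heights in the ``body'' of the diagram enters the measure-extension estimate from Theorem~\ref{Thm:Meas-ext-general}.
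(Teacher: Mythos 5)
Your proof is correct and follows essentially the same route as the paper's: both read off the recurrence $H^{(n+1)}_i = (a-k)H^{(n)}_i + H^{(n)}_{i+1}$ for $i \geq 2$ from the two nonzero entries in the $i$-th row of $F$ and close a single scalar induction using the fact that the heights are independent of $i$ for $i > 1$. No gaps.
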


\begin{proof}[Proof of Lemma~\ref{Lemma:Heights_Exk}]
First notice that $H_i^{(n)} = H_2^{(n)}$ for all $n \in \mathbb{N}_0$ and $i > 1$.
We prove the formula~(\ref{Eq_Heights_Exk}) by induction. For all $i > 1$ we have 
$$
H_i^{(0)} = 1 \mbox{ and } H_i^{(1)} = (a - k) + 1.
$$
Assume that $H_i^{(n)} = (a - (k-1))^n$ for all $i > 1$. Then
$$
H_i^{(n+1)} = (a - k)H_i^{(n)} + H_{i+1}^{(n)} = (a - k + 1)(a - (k-1))^n =  (a - (k-1))^{n+1}. \qedhere
$$
\end{proof}

\begin{proof}[Proof of Theorem~\ref{Thm:measures_Exk}]
By Theorem~\ref{Thm:Meas-ext-general} and Lemma~\ref{Lemma:Heights_Exk}, we have    
$$
\wh{\ov \mu}_1 (\wh X_{\ov B_1}) = 1 + \sum_{n = 0}^{\infty} \frac{H_2^{(n)}}{a^{n + 1}} = 1 + \sum_{n = 0}^{\infty} \frac{(a - (k-1))^n}{a^{n + 1}} = 1 + \frac{1}{a}\sum_{n = 0}^{\infty} \left(1 - \frac{k-1}{a}\right)^n.
$$
Let $q = 1 - \frac{k-1}{a} \leq 1$. Assume that $k > 1$, so $q < 1$. Then
$$
\wh{\ov \mu}_1 (\wh X_{\ov B_1}) = 1 + \frac{1}{a}\sum_{n = 0}^{\infty} q^n = 1 + \frac{1}{a(1-q)} = 1 + \frac{a}{a(k-1)} = 1 + \frac{1}{k-1}.
$$

If $k = 1$, then $\wh{\ov \mu}_1 (\wh X_{\ov B_1}) = \infty$.

For any $k \geq 1$ and $i > 1$ we also have
$$
\wh{\ov \mu}_i (\wh X_{\ov B_i}) = 1 + \sum_{n = 0}^{\infty} \frac{H_{i+1}^{(n)}}{(a-k)^{n + 1}} = 1 + \sum_{n = 0}^{\infty} \frac{(a - k + 1)^n}{(a-k)^{n + 1}} = \infty.
$$
Thus, by Theorem~\ref{thm:All-erg-inv-meas-DIO}, for $k = 1$, there are no probability invariant measures on $B$, and for each $k > 1$, there is a unique probability ergodic invariant measure $\mu$ on $B$ which is an extension of the unique invariant measure from the first odometer.
\end{proof}

\begin{thm}\label{Thm:Dio_k_eigenvectors}
For $k > 1$, the unique (up to constant multiple) ergodic invariant measure $\mu$ on $B$ from Theorem~\ref{Thm:measures_Exk}
 can be obtained by using the 
 eigenvalue $\lambda = a$, the corresponding 
    eigenvector 
    \begin{equation}\label{eq:xi}
    \xi = (\xi_i) = \left(1, \frac{1}{k}, \frac{1}{k^2}, \ldots, \frac{1}{k^{i-1}}, \ldots\right)^T
    \end{equation}
    and the formula 
    \begin{equation}\label{Eq_mu_cyl}
   \mu([\ol e(w, v)]) = \frac{\xi_v}{\lambda^{n}}, 
\end{equation}
    where $\ol e(w, v)$ is a  
    finite path that begins at $w \in V_0$ and ends at $v \in 
    V_n$ (recall that we identify the vertices $v \in V_n$ with natural numbers $i \in \mathbb{N}$). 

For $k = 1$, there is an ergodic infinite $\sigma$-finite invariant measure $\mu$ which takes finite positive values on all cylinder sets. This measure can be obtained as the extension of the unique invariant measure from the first odometer as in Theorem~\ref{Thm:measures_Exk} or using the 
 eigenvalue $\lambda = a$, the corresponding
    eigenvector 
    $$
    \xi = (\xi_i) = \left(1, 1, 1, \ldots \right)^T
    $$
    and  formula~(\ref{Eq_mu_cyl}). 

\end{thm}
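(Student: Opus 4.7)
The plan is to verify that $\xi$ is a right eigenvector of $A := F^T$ corresponding to the eigenvalue $\lambda = a$, apply part~(2) of Theorem~\ref{BKMS_measures=invlimits} to obtain a tail-invariant measure $\mu$ satisfying $\mu([\ol e(w,v)]) = \xi_v/\lambda^n$, and then identify this $\mu$ with the extension $\wh{\ol\mu}_1$ from the first odometer in both the $k>1$ and $k=1$ cases.

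First I verify $A\xi = \lambda\xi$ directly. Since $A = F^T$ is lower triangular with $A_{11}=a$, $A_{ii}=a-k$ for $i\ge 2$, and $A_{i,i-1}=1$ on the subdiagonal, row~$1$ yields $a\cdot 1 = \lambda\xi_1$, and for $i\ge 2$ the relation $\xi_{i-1} + (a-k)\xi_i = k\xi_i + (a-k)\xi_i = a\xi_i$ follows from $\xi_{i-1}=k\xi_i$ (the case $k=1$ collapses to $1+(a-1)=a$ with $\xi\equiv 1$). Setting $\ol p^{(n)}:=\xi/\lambda^n$ therefore satisfies $A\ol p^{(n+1)}=\ol p^{(n)}$, so Theorem~\ref{BKMS_measures=invlimits}(2) produces the required tail-invariant measure $\mu$. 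The total mass is $\mu(X_B)=\sum_{v\in V_0} H^{(0)}_v \ol p^{(0)}_v = \sum_v \xi_v$, which equals $k/(k-1)$ for $k>1$ and $+\infty$ for $k=1$. In the first case $\mu$ normalizes to a probability measure; in the second $\mu$ is infinite but $\sigma$-finite, since each cylinder has finite measure $\xi_v/\lambda^n$.

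The key step is to show that $\mu$ is concentrated on $\wh X_{\ol B_1}$. Because every edge out of a vertex $j$ ends at $j$ or at $j-1$, vertex labels along any path of $B_{IO}$ are non-increasing, so a path that visits vertex $i$ at both levels $n$ and $m>n$ must stay at $i$ throughout the levels $n,n+1,\ldots,m$. The number of length-$m$ cylinders from $V_0$ passing through vertex $i$ at every level from $n$ to $m$ is $H^{(n)}_i\,(a-k)^{m-n}$, each of $\mu$-measure $\xi_i/\lambda^m$, so
$$
\mu\Bigl(\bigcap_{j=n}^{m} X^{(j)}_i\Bigr)=\frac{H^{(n)}_i\,\xi_i}{a^n}\left(\frac{a-k}{a}\right)^{m-n}.
$$
For $i\ge 2$ the ratio $(a-k)/a<1$ sends the right-hand side to $0$ as $m\to\infty$, so $\mu\bigl(\wh X_{\ol B_i}^{(n)}\bigr) = \mu\bigl(\bigcap_{m\ge n} X^{(m)}_i\bigr)=0$ and hence $\mu(\wh X_{\ol B_i})=0$. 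By Claim~\ref{r2.24}, $\mu(X_B\setminus \wh X_{\ol B_1})=0$. Because every path in $\wh X_{\ol B_1}$ eventually lies in a cylinder ending at vertex $1$, and since a tail-invariant measure on $\wh X_{\ol B_1}$ is determined by its values on such cylinders, and $\mu$ and $\wh{\ol\mu}_1$ both assign the value $1/a^n$ to every cylinder ending at $1\in V_n$, we conclude $\mu = \wh{\ol\mu}_1$. For $k>1$ this (after normalization) coincides with the unique ergodic probability measure of Theorem~\ref{Thm:measures_Exk}; for $k=1$ the ergodicity of the $\sigma$-finite measure $\mu=\wh{\ol\mu}_1$ is inherited from the extension construction.

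The main difficulty I anticipate is the $k=1$ case, where both $\mu(X_B)$ and $\mu(\wh X_{\ol B_1})$ are infinite and one cannot deduce concentration by a direct mass comparison. The contractive geometric factor $(a-k)/a<1$, controlling the proportion of the tower at a vertex $i\ge 2$ that remains at $i$ at the next level, is the essential quantitative input that makes the concentration argument uniform in $k$.
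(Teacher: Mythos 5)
Your proof is correct, but it identifies the eigenvector measure $\mu$ with $\wh{\ol\mu}_1$ by a genuinely different mechanism than the paper. The paper works from the subdiagram outward: after the same eigenvector verification, it computes $\wh{\ol\mu}_1([\ol e^{(m)}(i+1)])$ explicitly for every cylinder by summing, over the number of levels a path lingers at its terminal vertex before dropping down, a geometric series with ratio $(a-k)/a$, and proceeds by induction on the vertex index $i$ to match the answer with $\xi_{i+1}/a^m$. You instead work from the eigenvector measure inward: the contraction estimate $\mu\bigl(\bigcap_{j=n}^m X_i^{(j)}\bigr)=\bigl(H_i^{(n)}\xi_i/a^n\bigr)\bigl((a-k)/a\bigr)^{m-n}\to 0$ shows $\mu(\wh X_{\ol B_i})=0$ for all $i\ge 2$, Claim~\ref{r2.24} then concentrates $\mu$ on $\wh X_{\ol B_1}$, and the two measures agree there because both assign $1/a^n$ to cylinders ending at vertex $1$. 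The same geometric ratio drives both arguments, but your route replaces the cylinder-by-cylinder computation and the induction on $i$ with a single support argument, treats $k>1$ and $k=1$ uniformly, and makes transparent why the identification survives when both total masses are infinite; the paper's computation, in exchange, yields the explicit value of $\wh{\ol\mu}_1$ on every cylinder without appealing to uniqueness. Two small points you should make explicit: continuity from above requires $\mu(X_i^{(n)})=H_i^{(n)}\xi_i/a^n<\infty$ (which holds here since $H_i^{(n)}=(a-k+1)^n$), and the determination step rests on the observation that vertex labels along a path are non-increasing, so every cylinder ending at vertex $1$ lies entirely inside $\wh X_{\ol B_1}$ and these cylinders generate the trace $\sigma$-algebra on that set.
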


\begin{proof} First we prove that, for $k \geq 1$, the vector $\xi = (\xi_i)$ given by the formula~\eqref{eq:xi} is a right eigenvector for $A = F^T$ associated with the eigenvalue $\lambda = a$. Indeed, for any eigenvector $\eta = (\eta_i)$ and eigenvalue $\lambda$, we have $a\eta_1 = \lambda \eta_1$. If we take $\eta_1 = 1$, then $\lambda = a$. From the equality $\eta_1 + (a - k)\eta_2 = \lambda \eta_2$ we obtain $1 + (a - k)\eta_2 = a \eta_2$ and $\eta_2 = \frac{1}{k}$. It is easy to prove by induction that, for every $i \in \mathbb{N}$, we have $\eta_i = \frac{1}{k^{i-1}} = \xi_i$. 
Note that for $k = 1$ we obtain eigenvector $\xi = (1,1,1, \ldots)^T$.
By Theorem~\ref{BKMS_measures=invlimits}, the measure $\mu$ defined on each cylinder set by formula~(\ref{Eq_mu_cyl}) determines a tail-invariant measure on $B$ which takes finite values on cylinder sets. We show that this measure coincides with $\wh {\ov \mu}_1$ on all cylinder sets. First we prove that $\mu([{\ov e}^{(m)}(2)]) = \wh {\ov \mu}([{\ov e}^{(m)}(2)])$ for all $m \in \mathbb{N}$, where $[{\ov e}^{(m)}(2)]$ is any cylinder set of length $m$ which ends in vertex $2$. 

We have $\mu([{\ov e}^{(m)}(1)]) = \wh {\ov \mu}_1([{\ov e}^{(m)}(1)]) = \frac{1}{a^{m}}$ for all $m \in \mathbb{N}_0$. Then we can check that
$$
\wh {\ov \mu}_1([{\ov e}^{(0)}(2)]) = \sum_{n = 0}^{\infty} \frac{(a-k)^{n}}{a^{n+1}}  =  \frac{1}{a}\cdot \frac{1}{1 - \frac{a - k}{a}} = \frac{1}{k} = \mu([{\ov e}^{(0)}(2)]).
$$
Similarly, since the diagram is stationary, we obtain
$$
\wh {\ov \mu}_1([{\ov e}^{(1)}(2)]) = \sum_{n = 0}^{\infty} \frac{(a-k)^{n}}{a^{n+2}}  =  \frac{1}{a^2}\cdot \frac{1}{1 - \frac{a - k}{a}} = \frac{1}{ak} =  \mu([{\ov e}^{(1)}(2)]).
$$
In general, for every $m \in \mathbb{N}_0$,
$$
\wh {\ov \mu}_1([{\ov e}^{(m)}(2)]) = \sum_{n = 0}^{\infty} \frac{(a-k)^{n}}{a^{n + m + 1}}  =  \frac{1}{a^{m+1}}\cdot \frac{1}{1 - \frac{a - k}{a}} = \frac{1}{a^mk} = \mu([{\ov e}^{(m)}(2)]).
$$
Now we can use the knowledge about the measures of cylinder sets that end in the second vertex and similarly show that  
$$
\wh {\ov \mu}_1([{\ov e}^{(0)}(3)]) = \sum_{n = 0}^{\infty} \frac{(a-k)^{n}}{ka^{n+1}}  =  \frac{1}{ak}\cdot \frac{1}{1 - \frac{a - k}{a}} = \frac{1}{k^2} = \mu([{\ov e}^{(0)}(3)]).
$$ 
In general, we can prove by induction on $i$ that for every $i \in \mathbb{N}$ and $m \in \mathbb{N}_0$ we have
$$
\wh {\ov \mu}_1([{\ov e}^{(m)}(i+1)]) = \sum_{n = 0}^{\infty} \frac{(a-k)^{n}}{k^{i-1}a^{n+m+1}}  =  \frac{1}{a^{m+1}k^{i-1}}\cdot \frac{1}{1 - \frac{a - k}{a}} = \frac{1}{k^{i}a^{m}} = \mu([{\ov e}^{(m)}(i+1)]). \qedhere
$$
\end{proof}

\begin{remark}
For $i > 1$ and $k \in \mathbb{N}$, the measure $\wh{\ov \mu}_i$ does not attain finite values on all cylinder sets. In particular, for every $i > 1$ we have
    $$
    \wh{\ov \mu}_i([{\ov e}^{(0)}(i+1)]) = \infty.
    $$
    Indeed, we obtain
    $$
    \wh{\ov \mu}_i([{\ov e}^{(0)}(i+1)]) = \sum_{n = 0}^{\infty} \frac{(a-k)^{n}}{(a-k)^{n+1}}  =  \sum_{n = 0}^{\infty} \frac{1}{a-k} = \infty.
    $$
\end{remark}

More generally, the following theorem holds:
\begin{theorem}\label{Thm:DIO_a_i_all_inv_m}
Let $B$ be a stationary generalized Bratteli diagram with  the incidence matrix
$$
F = \begin{pmatrix}
    a_1 & 1 & 0 & 0 &\ldots\\
    0 & a_2 & 1 & 0 & \ldots\\
    0 & 0 & a_3 & 1 & \ldots\\
    0 & 0 & 0 & a_4 & \ldots\\
    \vdots & \vdots & \vdots & \vdots &\ddots
\end{pmatrix},
 $$
 where $a_1 > a_k$ for all $k \geq 2$.
 Then there is an invariant measure $\mu$ on $B$ generated by the right eigenvector $\mathbf{x}$ of $A = F^T$ associated to the eigenvalue $\lambda = a_1$, where
 $$
 \mathbf{x}^T = \left(1, \; \frac{1}{a_1 - a_2}, \; \frac{1}{(a_1 - a_2)(a_1 - a_3)}, \; \ldots, \quad \frac{1}{(a_1 - a_2)(a_1 - a_3)\cdots (a_1 - a_n)},\; \ldots \; \right).
 $$ 
This measure can be also obtained as an extension of 
 the probability measure from the odometer that passes through the diagram's first vertex. The measure $\mu$ is finite if and only if 
 $$
 \sum_{n = 2}^{\infty} \prod_{k = 2}^n \frac{1}{a_1 - a_k} < \infty.
 $$
\end{theorem}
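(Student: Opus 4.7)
The plan is to verify three things in sequence: (i) that $\mathbf{x}$ is a right eigenvector of $A = F^T$ for the eigenvalue $\lambda = a_1$, (ii) that the tail invariant measure $\mu$ it produces via Theorem~\ref{BKMS_measures=invlimits}(2) coincides with the canonical extension $\wh{\ov \mu}_1$ of the odometer measure from $\ov B_1$, and (iii) that $\mu(X_B) = \sum_{v \geq 1} x_v$, which yields the stated series criterion.

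For (i), the matrix $A = F^T$ is lower bidiagonal with diagonal entries $a_1, a_2, \ldots$ and subdiagonal $1$s. Componentwise, $A\mathbf{x} = \lambda \mathbf{x}$ forces $a_1 x_1 = \lambda x_1$, so $\lambda = a_1$ after normalizing $x_1 = 1$, and the relation $x_{v-1} + a_v x_v = a_1 x_v$ for $v \geq 2$ iterates to the stated product formula; all entries are positive because $a_1 > a_v$. Setting $\ov p^{(n)} := \mathbf{x}/a_1^n$ satisfies $F^T \ov p^{(n+1)} = \ov p^{(n)}$, and Theorem~\ref{BKMS_measures=invlimits}(2) then produces a tail invariant measure $\mu$ with $\mu([\ov e(w,v)]) = x_v/a_1^n$ on every cylinder ending at vertex $v \in V_n$.

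For (ii), I would compute $\wh{\ov \mu}_1$ directly on a cylinder $[\ov e^{(n-1)}(v)]$ of length $n$ ending at vertex $v$. From the structure of $F$, the only edges leaving vertex $k$ at any level go either to vertex $k$ (with multiplicity $a_k$) or, for $k \geq 2$, to vertex $k-1$ (one ``jump'' edge). A path in the cylinder lies in $\wh X_{\ov B_1}$ iff its tail eventually reaches vertex $1$ and stays there, so such tails are parametrized by nonnegative integers $k_v, k_{v-1}, \ldots, k_2$ recording the ``stay'' steps at vertices $v, v-1, \ldots, 2$ between consecutive jumps, followed by an arbitrary continuation in the first odometer. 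Each such trajectory type contributes $\prod_{j=2}^v a_j^{k_j}$ cylinders of length $n + L$, where $L := k_v + \cdots + k_2 + (v-1)$, each ending at vertex $1$ and hence of $\wh{\ov \mu}_1$-mass $1/a_1^{n + L}$. Summing over all trajectory types and factoring the result as a product of geometric series gives
$$
\wh{\ov \mu}_1([\ov e^{(n-1)}(v)]) = \frac{1}{a_1^{n+v-1}} \prod_{j=2}^v \frac{a_1}{a_1 - a_j} = \frac{1}{a_1^n}\prod_{j=2}^v \frac{1}{a_1 - a_j} = \mu([\ov e^{(n-1)}(v)]),
$$
so $\mu$ and $\wh{\ov \mu}_1$ agree on every cylinder and thus coincide as Borel measures.

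For (iii), the level-$0$ Kakutani-Rokhlin decomposition $X_B = \bigsqcup_{v \in V_0} X_v^{(0)}$ with $H_v^{(0)} = 1$ gives
$$
\mu(X_B) = \sum_{v = 1}^\infty H_v^{(0)} p_v^{(0)} = \sum_{v = 1}^\infty x_v = 1 + \sum_{v = 2}^\infty \prod_{k=2}^v \frac{1}{a_1 - a_k},
$$
which is finite exactly when the displayed series converges. The main delicate step is the identification in (ii): one must decompose $[\ov e^{(n-1)}(v)] \cap \wh X_{\ov B_1}$ by the entire ``downward staircase'' $v \to v-1 \to \cdots \to 1$ rather than merely by the level at which the path first enters the odometer, and then recognize that the resulting multi-geometric series $\prod_{j=2}^v (1 - a_j/a_1)^{-1}$ is precisely what reproduces the eigenvector entry $x_v$ after cancellation of the $a_1^{v-1}$ factor.
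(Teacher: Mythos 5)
Your proposal is correct and follows the same overall strategy as the paper: verify the eigenvector relation, identify the eigenvector measure with the extension $\wh{\ov\mu}_1$ of the odometer measure from $\ov B_1$ by computing cylinder measures, and read off finiteness. The one substantive difference is in step (ii): the paper proceeds by induction on the vertex index, decomposing a cylinder ending at vertex $i+1$ only according to the number of vertical steps before the single jump to vertex $i$, and then reusing the already-established value $x_i/a_1^{m'}$ for cylinders ending at $i$; you instead unroll the whole staircase $v \to v-1 \to \cdots \to 1$ at once and evaluate a product of geometric series. The two computations are equivalent, and your closed form makes visible why the product $\prod_{j=2}^v (a_1-a_j)^{-1}$ appears, at the cost of a slightly more delicate bookkeeping of the disjoint cylinder decomposition of $[\ov e^{(n-1)}(v)]\cap \wh X_{\ov B_1}$. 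You also make explicit the final step $\mu(X_B)=\sum_{v} H^{(0)}_v\, p^{(0)}_v=\sum_v x_v$, which yields the stated convergence criterion; the paper leaves this short argument implicit, so this is a worthwhile addition rather than a deviation.
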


\begin{proof}
It is straightforward to check that $\mathbf{x}^T F = a_1 \mathbf{x}$. We show that the measure $\mu$ generated by $a_1$ and $\mathbf{x}$ can be obtained as an extension of the unique invariant probability measure $\ov \mu_1$ from the first odometer.
    
We have 
$$
\mu([{\ov e}^{(m)}(1)]) = \wh {\ov \mu}_1([{\ov e}^{(m)}(1)]) = \frac{1}{a_1^{m}}$$ for all $m \in \mathbb{N}_0$. Then for every $m \in \mathbb{N}_0$,
    $$
    \wh {\ov \mu}_1([{\ov e}^{(m)}(2)]) = \sum_{n = 0}^{\infty} \frac{a_2^n}{a_1^{n + m + 1}} =  \frac{1}{a_1^{m+1}} \cdot \frac{1}{1 - \frac{a_2}{a_1}} = \frac{1}{(a_1 - a_2)a_1^m} =\mu([{\ov e}^{(m)}(2)])
    $$
    and
        $$
        \ba
    \wh {\ov \mu}_1([{\ov e}^{(m)}(i+1)]) = &\ \sum_{n = 0}^{\infty} \frac{a_{i+1}^n}{a_1^{n + m + 1}\prod_{j = 1}^i(a_1-a_j)}\\ = &\ \frac{1}{a_1^{m + 1}\prod_{j = 1}^i(a_1-a_j)} \cdot \frac{1}{1 - \frac{a_{i+1}}{a_1}}\\  =  &\ \frac{1}{\prod_{j = 1}^{i+1}(a_1-a_j)a_1^m} =\mu([{\ov e}^{(m)}(i+1)]). \qedhere
    \ea 
    $$ 
 \end{proof}

\begin{remark}
Notice that in the case when there is $m \in \mathbb{N}$ such that $a_m > a_k$ for all $k > m$ then there is an eigenvector
$$
\textbf{x}^T = \left(0, \; \ldots, \; 0, \; 1, \; \frac{1}{a_m - a_{m+1}}, \; \ldots, \; \frac{1}{(a_m - a_{m+1})\cdots (a_m - a_{m + n})}, \; \ldots  \; \right)
$$
corresponding to the eigenvalue $\lambda = a_m$. These eigenvalue and eigenvector generate an invariant measure which is the extension of a probability measure from the odometer that passes through the vertex $m$ of the diagram.
\end{remark}

The following example was considered in \cite{Bezuglyi_Jorgensen_Karpel_Sanadhya_2023} and presents a stationary generalized Bratteli diagram with infinitely many odometers such that there does not exist any tail-invariant measure on 
$X_B$ that assigns finite values to all cylinder sets. Indeed, it is straightforward to check that the measure extension from every odometer is infinite, moreover, for every $i \in \mathbb{N}$, we have 
$$
\wh{\ov \mu}_i ([{\ov e}^{(0)}(i+1)]) = \infty.
$$

\begin{example}\label{example}
Let $B = B(F)$ be a generalized stationary Bratteli 
diagram as shown in Figure \ref{Fig:no measure} and given by 
$\mathbb{N} \times \mathbb{N}$ incidence matrix 
\begin{equation}\label{matrix_no_prob}
    F = \begin{pmatrix}
2 & 1 & 0 & 0 & \ldots\\
0 & 3 & 1 & 0 & \ldots\\
0 & 0 & 4 & 1 & \ldots\\
0 & 0 & 0 & 5 & \ldots\\
\vdots & \vdots & \vdots & \vdots & \ddots\\
\end{pmatrix}.
\end{equation} Then there does not exist any tail-invariant measure on 
$X_B$ that assigns finite values to all cylinder sets.

\begin{figure}[hbt!]
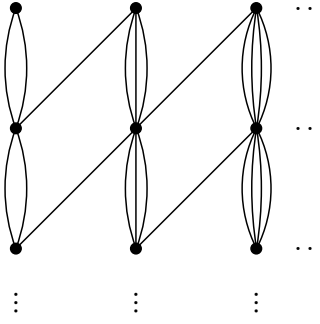

\unitlength=.8cm
\begin{graph}(7,6)
% \graphnodesize{0.2}
% \roundnode{V0}(3,6)
%  %\nodetext{V0}(-1,0){$V_0$}
 \roundnode{V11}(2,5)
 %\nodetext{V11}(-0.7,0){$w_1^{(0)}$}
  %\nodetext{V12}(0.7,0){$w_2^{(0)}$}
 \roundnode{V12}(4,5)
 \roundnode{V13}(6,5) 
  % The second level vertices
 \roundnode{V21}(2,3)
 \roundnode{V22}(4,3)
 \roundnode{V23}(6,3)
  %\nodetext{V21}(-0.7,0){$w_1^{(1)}$}
 % \nodetext{V22}(0.7,0){$w_2^{(1)}$}
 % The third level vertices
 \roundnode{V31}(2,1)
 \roundnode{V32}(4,1)
 \roundnode{V33}(6,1)
 % \nodetext{V31}(-0.7,0){$w_1^{(2)}$}
  %\nodetext{V32}(0.7,0){$w_2^{(2)}$}
  %
 %
 % EDGES
 \graphlinewidth{0.025}
% % First level
%  \edge{V0}{V11}
%  \edge{V0}{V12}

 % Second level
 \bow{V21}{V11}{0.09}
  \bow{V21}{V11}{-0.09}
    \edge{V21}{V12}
     
 \bow{V22}{V12}{0.09}
 \bow{V22}{V12}{-0.09}
 \edge{V22}{V12}
 
 \bow{V23}{V13}{0.12}
  \bow{V23}{V13}{-0.12}
   \bow{V23}{V13}{0.04}
  \bow{V23}{V13}{-0.04}
 \edge{V22}{V13}

 %third level
 \bow{V31}{V21}{0.09}
 \bow{V31}{V21}{-0.09}
   \edge{V31}{V22}
%     \edge{V32}{V22}[\graphlinecolour(1,0,0)]
 \bow{V32}{V22}{0.09}
 \bow{V32}{V22}{-0.09}
 \edge{V32}{V22}

  \bow{V33}{V23}{0.12}
  \bow{V33}{V23}{-0.12}
   \bow{V33}{V23}{0.04}
  \bow{V33}{V23}{-0.04}
 \edge{V32}{V23}
 
    \freetext(6.9,5){$\ldots$}
  \freetext(6.9,3){$\ldots$}
    \freetext(6.9,1){$\ldots$}
    \freetext(2,0.1){$\vdots$}
  \freetext(4,0.1){$\vdots$}
    \freetext(6,0.1){$\vdots$}
%\freetext(3,0.1){.\,.\,.\,.\,.\,.\,.\,.\,.\,.\,.\,.\,.\,.\,.\,.\,.\,.\,.\,.\,..\,.\,.\,.\,.\,.\,.\,.}
\end{graph}
\caption{A stationary generalized Bratteli diagram with no invariant measure that takes finite values on all cylinder sets (illustration to Example~\ref{example}).}\label{Fig:no measure}
\end{figure}
\end{example}

\subsection{Non-stationary generalized Bratteli diagrams with infinitely many odometers.}
Now, we modify the diagram given in 
Example \ref{example} and consider a non-stationary
generalized Bratteli diagram $B$ defined by a sequence of 
natural numbers $\{a_{i} : i \in \N_0 \}$.
%such that
%$\sum_{i = 0}^{\infty} \frac{1}{a_{i}} < \infty$. 
Without 
loss of generality, we can assume that $a_n \geq 2$ for all
$n$.
The diagram $B$ consists of an infinite sequence of
non-stationary odometers 
connected with the neighboring odometer by single edges.
More precisely, let  $V_{n} = \N$
and for any $n \in \mathbb{N}_0$, the incidence matrix $F_{n}$ has the form
\be\label{eq-m-x DIO}
{F}_{n} =
\begin{pmatrix}
 a_{n} &      1 &      0 &      0 & \ldots & \ldots & \ldots & \ldots  \\
     0 &  a_{n} &      1 &      0 &      0 & \ldots & \ldots & \ldots  \\
     0 &      0 &  a_{n} &      1 &      0 &      0 & \ldots & \ldots  \\
     0 &      0 &      0 &  a_{n} &      1 &      0 &      0 & \ldots  \\
\ldots & \ldots & \ldots & \ldots & \ldots & \ldots & \ldots & \ldots  \\
\end{pmatrix}.
\ee

Fix $i \geq 1$, set $W_{n} = \left\{ i \right\}$, $n = 1, 2, 
\ldots$ and define the subdiagram ${\overline{B}}_i = (\ol W, {\ol E})$ as above with the only difference that 
the set $\ol E_n$ is formed now by $a_n$ edges connecting 
the vertices $i \in V_n$ and $i\in V_{n+1}$. 
The unique tail invariant probability measure
$\overline{\mu} = \overline{\mu}_i$ on
${\overline{B}}_i = (\ol W, {\ol E})$ is given by the formula
$\overline{\mu}([\ol e]) = \dfrac{1}{a_{0}\ \cdots\ a_{n}}$ 
where $r(\ol e) = i \in V_{n+1}$. 

By definition of the diagram, $H^{(n)} = H^{(n)}_i$ 
for all  $n$ and  $i$ (as usual, we set $H^{(0)}_i =1$). 
Then $H^{(n + 1)} = H_{i}^{(n + 1)}  = (a_{n} + 1) H^{(n)}$ 
which implies that 
$$H^{(n+1)} = (a_{0} +1) \ \cdots \ (a_{n} + 1), \quad n \in 
\N_0.
$$ 

\begin{thm}\label{Thm:non-stat-DIO-ex-erg-mu}
For every $i$, the extension $\wh{\ol \mu}_i$ of $\ov \mu_i$ is finite if and only if 
$$
\sum_{n \geq 0} \frac{1}{a_n} < \infty.
$$
If the extensions $\wh{\ov \mu}_i$ for $i \in \mathbb{N}$ are finite then the set of all ergodic finite tail invariant measures
on the path space of the diagram $B$ coincides with $\{\wh{\ol \mu}_i \ : i \in \N\}$.
\end{thm}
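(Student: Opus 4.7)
The plan is to apply Theorem~\ref{Thm:Meas-ext-general} to each vertical odometer subdiagram $\ol B_i$ and reduce the resulting series to one whose convergence is controlled by $\sum 1/a_n$. Since $W_n = \{i\}$ at every level, the outer sum in~\eqref{Eq:mu_ext} has a single summand $v = i$, and inspection of~\eqref{eq-m-x DIO} shows that among $w \in W'_n = \N\setminus\{i\}$ only $w = i+1$ yields a nonzero entry, with $f^{(n)}_{i,i+1} = 1$. Therefore the double sum collapses to
$$
\wh{\ol\mu}_i(\wh X_{\ol B_i}) \;=\; 1 + \sum_{n=0}^\infty H^{(n)}_{i+1}\,\ol p^{(n+1)}_i.
$$

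Next I would prove by induction on $n$, using the recurrence $H^{(n+1)}_j = a_n H^{(n)}_j + H^{(n)}_{j+1}$ read off from the $j$th row of $F_n$, that $H^{(n)}_j$ is independent of $j$ and equals $\prod_{k=0}^{n-1}(a_k+1)$. Combined with the odometer formula $\ol p^{(n+1)}_i = (a_0\cdots a_n)^{-1}$, this gives the closed-form expression
$$
\wh{\ol\mu}_i(\wh X_{\ol B_i}) \;=\; 1 + \sum_{n=0}^\infty \frac{1}{a_n}\prod_{k=0}^{n-1}\Bigl(1 + \frac{1}{a_k}\Bigr),
$$
which does not depend on $i$. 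As a consequence all the extensions $\wh{\ol\mu}_i$ are finite simultaneously or infinite simultaneously, so the hypothesis in the second half of the theorem is in fact equivalent to $\sum_n 1/a_n < \infty$.

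The convergence equivalence itself is elementary. If $\sum 1/a_n < \infty$ then the partial products are uniformly bounded by $C := \exp\bigl(\sum_k 1/a_k\bigr)$, and the tail of the series is dominated by $C\sum 1/a_n < \infty$. Conversely, each factor $1 + 1/a_k \geq 1$, so the $n$th term is at least $1/a_n$; divergence of $\sum 1/a_n$ therefore forces the series to diverge. I do not anticipate a substantive obstacle at this step, the only care being the empty-product convention at $n=0$.

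For the second assertion I would appeal directly to Theorem~\ref{thm:All-erg-inv-meas-DIO}, which identifies the set of ergodic probability tail invariant measures on any diagram of infinite odometers with the normalizations of those $\wh{\ol\mu}_i$ of finite total mass. Under the standing assumption of the second part, this family is the full collection $\{\wh{\ol\mu}_i : i \in \N\}$, which completes the proof.
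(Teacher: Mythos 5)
Your proposal is correct and follows essentially the same route as the paper: apply Theorem~\ref{Thm:Meas-ext-general} with $W_n=\{i\}$, use the height formula $H^{(n)}_j=\prod_{k=0}^{n-1}(a_k+1)$ and the odometer measure $\ol p^{(n+1)}_i=(a_0\cdots a_n)^{-1}$ to reduce the extension to the series $1+\sum_n \frac{1}{a_n}\prod_{k<n}\bigl(1+\frac{1}{a_k}\bigr)$, compare it with $\sum_n 1/a_n$ in both directions, and invoke Theorem~\ref{thm:All-erg-inv-meas-DIO} for the classification of ergodic measures. Your write-up is if anything slightly more explicit than the paper's (which only records the upper-bound inequality), and your observation that the value is independent of $i$ is a correct and useful remark.
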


\begin{proof}
Using Theorem \ref{Thm:Meas-ext-general}, 
we compute $\wh{\ol \mu}_i (\wh {X}_{\ov B_i})$ as follows:
$$
\ba
\wh{\ol \mu}_i(\wh X_{\ov B_i}) = &\ 
1 + \sum_{n \geq  0} \sum_{w \in {W}'_{n}} 
{f}_{iw}^{(n)} H_{w}^{(n)} \frac{1}{a_0 \ \cdots\ a_{n}}\\
= &\ 1 + \sum_{n \geq 1}\frac{{(a}_{0} + 1)  \cdot \ldots \cdot (a_{n-1} + 1) }{a_{0}  \cdot \ldots \cdot a_{n}}\\
= &\ 
1 + \sum_{n \geq 0} \frac{1}{a_{n}} \cdot \left[ 
\prod_{j = 0}^{n-1}\left(1 + \frac{1}{a_{j}}\right) \right] \\ 
\leq &\ 
1 + \prod_{j = 0}^{\infty}\left(1 + \frac{1}{a_{j}}\right) \cdot 
\sum_{n = 0}^{\infty} \frac{1}{a_{n}}.
\ea
$$
Thus, the extensions $\wh{\ov \mu}_i$ are finite if and only if $\sum_{n \geq 0} \frac{1}{a_n} < \infty.$
By Theorem~\ref{thm:All-erg-inv-meas-DIO}, if the extensions $\wh{\ov \mu}_i$ are finite then the measures $\{\wh{\ol\mu}_i : i \in \mathbb{N}\}$ form the set of all probability ergodic invariant measures on $B$. 
\end{proof}

\section{Vershik maps on generalized Bratteli diagrams with infinitely many odometers}\label{Sect:Vmap}

In this section, we consider different orders on reducible generalized Bratteli diagrams with infinitely many odometers (see Section~\ref{Sect:DIO_meas_general}). 
We present different classes of orders on such diagrams and study whether the corresponding Vershik map can be extended to a homeomorphism.

Recall that each vertex $v$ of (DIO) is determined by the pair $(n, i)$, where $n \geq 0$ is the level such that $v \in V_{n}$ and $i = 1, 2, \ldots$ is the vertex number of $v$ inside $V_{n}$. We will write $v = (n, i)$. For $v = (n,i) \in V \setminus V_0$, we have $r^{-1}(v) = \{e_{1}, e_{2}, \ldots, e_{a_n^{(i)}}, f\}$, 
where $e_{1}, e_{2}, \ldots, e_{a_n^{(i)}}$ are edges between $v$ and $w =(n-1, i)$, 
whereas $f$ is the edge between $v$ and $w = (n-1, i+1)$. 

Let $\omega = \{ \omega_{v}, \; v \in V \setminus V_{0} \}$ 
be an order of (DIO), i.e., for every 
$v \in  V \setminus V_{0}$,  ${\omega}_{v}$ 
is a linear order on the set $r^{-1}(v)$. 
Let ${\overline{e}}_{v}$ and $\widehat{e}_{v}$ 
be the minimal and the maximal edges in
$r^{-1}(v)$ respectively. 
We will say that the order ${\omega}_{v}$ is \textit{left} if ${\overline{e}}_{v} = f$, is \textit{right} if $\widehat{e}_{v} = f$ and is \textit{middle} if
${\overline{e}}_{v}$ and $\widehat{e}_{v}$ are different from $f$. 
According to Claim \ref{r2.24}, every maximal (minimal) infinite path 
$\overline{e} = \{ {e'}_{1}, {e'}_{2}, \ldots \}$ 
is contained in a unique $\wh X_{\ov B_i}$, $i \geq 1$. 
We will say that the odometer $\ov B_i$ is 
\textit{finite-right (finite-left)} 
if there exists a number $n_{i}$ such that the orders
${\omega}_{v}$ are not right (not left) for $n \geq n_{i}$, $v = (n, i)$.

Since each $\ov B_i$ is an odometer, the set $\wh X_{\ov B_i}$ consists of all paths that eventually pass vertically through vertex $i$. Since two different minimal paths cannot pass through the same vertex, we obtain that the set $\wh X_{\ov B_i}$ can contain not more than one infinite minimal path. If $\wh X_{\ov B_i}$ does contain an infinite minimal path then this path is eventually vertical, hence the odometer $\ov B_i$ should be finite-left. Similarly, the set $\wh X_{\ov B_i}$ contains the unique maximal infinite path if and only if the odometer $\ov B_i$ is finite-right (we denote $i \in I_{fr}$). We summarize the above remarks in the following proposition:

\begin{prop}\label{r2.25} 
The set $\wh X_{\ov B_i}$ contains the unique maximal (minimal) infinite path $\widehat{e}_{i, \max}$ (${\overline{e}}_{i, \min}$) iff the odometer $\ov B_i$ is finite-right (finite-left). For the remaining 
odometers, 
$\wh X_{\ov B_i} \cap X_{\max} = \emptyset$ 
($\wh X_{\ov B_i} \cap X_{\min} = \emptyset$).
\end{prop}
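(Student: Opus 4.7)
The plan is to analyze, for a fixed odometer $\ov B_i$, how the local orders at the vertices $(n, i)$ above $i$ control the existence and uniqueness of maximal or minimal infinite paths inside the tail saturation $\wh X_{\ov B_i}$. I will treat the ``maximal/finite-right'' equivalence; the ``minimal/finite-left'' case is entirely symmetric (reverse every order). By Claim~\ref{r2.24}, a path lies in $\wh X_{\ov B_i}$ if and only if it is eventually vertical through the column of vertex $i$, so the analysis reduces to what happens along that column.

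I would first prove the forward implication. Suppose $\wh X_{\ov B_i}$ contains a maximal infinite path $\widehat{e}=(e_0, e_1, \ldots)$ and choose $N$ with $r(e_n) = (n+1, i)$ for all $n \ge N$. If $\omega_{(n+1, i)}$ were a right order for some $n \ge N+1$, then the maximum of $r^{-1}((n+1, i))$ would be the single edge $f$ with source $(n, i+1)$, forcing $s(e_n) = (n, i+1)$. But $s(e_n) = r(e_{n-1})$ and $n-1 \ge N$, so we would also have $r(e_{n-1}) = (n, i) \ne (n, i+1)$, a contradiction. Hence $\omega_{(m, i)}$ fails to be right for all $m \ge N+2$, which is exactly the definition of $\ov B_i$ being finite-right. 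The contrapositive of this argument is precisely the ``no maximal path'' assertion: if $\ov B_i$ is not finite-right, then $\wh X_{\ov B_i}\cap X_{\max}=\emptyset$; the analogous statement for $X_{\min}$ follows in the same way.

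For the converse, assume $\ov B_i$ is finite-right with threshold $n_i$. I would build the maximal path $\widehat{e}_{i,\max}$ explicitly. For every $n \ge \max(n_i - 1, 0)$ the hypothesis that $\omega_{(n+1, i)}$ is not right forces the maximum of $r^{-1}((n+1, i))$ to be one of the vertical edges from $(n, i)$, and I set $e_n$ equal to that (unique) maximum. This pins down $s(e_n) = (n, i)$ and therefore $r(e_{n-1}) = (n, i)$. For the remaining smaller indices I extend recursively downward by defining $e_n$ as the unique maximum of $r^{-1}(r(e_n))$, with $r(e_n) = s(e_{n+1})$; each step is well-defined because $r^{-1}(v)$ is finite and linearly ordered by $\omega_v$. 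The resulting path is maximal, visits $(n, i)$ for all large $n$, hence lies in $\wh X_{\ov B_i}$, and is unique because every choice along the way was forced.

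The only real obstacle is bookkeeping: keeping the indexing between an edge level $n$ and its range level $n+1$ straight, and remembering that $r^{-1}((n+1, i))$ decomposes into the pool of $a_n^{(i)}$ vertical edges from $(n, i)$ together with the single edge $f$ from $(n, i+1)$, as described in the paragraph preceding the proposition. Once this is kept clean, both directions collapse to the elementary observation that the maximum (respectively minimum) of a finite linearly ordered set is a single element, so a path whose range sequence is pinned down is itself pinned down by extremality.
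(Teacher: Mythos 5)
Your proposal is correct and follows essentially the same route as the paper, which states Proposition~\ref{r2.25} as a summary of the preceding remarks: paths in $\wh X_{\ov B_i}$ are exactly those eventually vertical in column $i$, so a maximal (minimal) path there forces the orders $\omega_{(n,i)}$ to be eventually non-right (non-left), and conversely a finite-right (finite-left) column yields the extremal path by taking the forced vertical maxima high up and extending downward. The only cosmetic difference is that you prove uniqueness by noting every choice is forced, whereas the paper invokes the general fact that two distinct extremal paths cannot pass through a common vertex; both are valid.
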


Let $\varphi_{\omega} \colon (X_{B} \setminus X_{\max}) \rightarrow (X_{B} \setminus X_{\min}$) 
be the Vershik map defined by the order $\omega$. We now consider different orders $\omega$ on $B$ and examine when $\varphi_{\omega}$ can be extended to a homeomorphism. Since $B$ is not stationary, strictly speaking, we cannot consider stationary orders on $B$. We call an order on $B$ \textit{quasi-stationary} if for every vertex $i$ we fix whether it is left-ordered, right-ordered, or middle, and the type or order depends only on the vertex number but not on the level to which vertex belongs.

\begin{thm}\label{Thm:orders-DIO} 
Let $I_{fr}$ ($I_{fl}$) be the set of all $i \in \mathbb{N}$ 
such that $\ov B_i$ is finite-right (finite-left). 
Then

(i)  $\varphi_{\omega}$ extends to a Borel Vershik map on $X_B$
if and only if
$$
|I_{fr}| = |I_{fl}|;
$$

(ii) if 
$$
I_{fr} = I_{fl} = \emptyset,
$$ then $\varphi_{\omega}$ is a homeomorphism of $X_B$;

(iii) for any quasi-stationary order $\omega$ on $B$, the map $\varphi_{\omega}$ cannot extend to a Vershik homeomorphism of the whole path-space $X_B$.
\end{thm}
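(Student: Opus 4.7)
The plan is to dispose of (ii) and (i) quickly from the structural results already established, and then attack (iii) by comparing two different approximations of a single maximal path.

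For (ii), if $I_{fr} = I_{fl} = \emptyset$, then Proposition~\ref{r2.25} gives $X_{\max} = X_{\min} = \emptyset$, and Definition~\ref{Def:VershikMap} already makes $\varphi_\omega : X_B \to X_B$ a homeomorphism without any extension. For (i), Proposition~\ref{r2.25} identifies $|X_{\max}| = |I_{fr}|$ and $|X_{\min}| = |I_{fl}|$; any Borel extension of $\varphi_\omega$ requires a Borel bijection between these two countable Borel subsets of the Polish space $X_B$, which exists if and only if the cardinalities agree.

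For (iii), fix a quasi-stationary order $\omega$ with type function $T : \N \to \{L,R,M\}$, so $I_{fr} = \{i : T(i) \neq R\}$ and $I_{fl} = \{i : T(i) \neq L\}$. If $|I_{fr}| \neq |I_{fl}|$, then (i) already rules out a Borel extension; otherwise a short counting argument gives $|I_{fr}| = |I_{fl}| = \aleph_0$, so that $\tau(i) := \min\{k \geq i : T(k) \neq L\}$ is finite for every $i \in I_{fr}$, and one can choose $i \geq 2$ with $T(i) \in \{L,M\}$. By quasi-stationarity $\widehat{e}_{i, \max}$ runs vertically through $i$ from level $0$. Consider the sequence $\{x^{(n)}\}$ that differs from $\widehat{e}_{i, \max}$ only at coordinate $n$, where the maximal vertical edge at $(n,i)$ is replaced by its vertical predecessor (possible since $a_n^{(i)} \geq 2$). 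Applying $\varphi_\omega$ rewrites coordinates $1, \ldots, n-1$ as the minimum path to $(n-1,i)$, and a straightforward induction along the $L$-block above $i$ shows that this minimum path agrees, on a growing initial segment, with the vertical minimum path through vertex $\tau(i)$; hence $\lim_n \varphi_\omega(x^{(n)}) = \ov{e}_{\tau(i), \min}$. Any continuous Vershik extension $\bar\varphi_\omega$ must therefore send $\widehat{e}_{i, \max}$ to $\ov{e}_{\tau(i), \min}$, so the induced map $\sigma : I_{fr} \to I_{fl}$ coincides with $\tau$.

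A direct inspection shows $\tau$ is injective precisely when every $L$ is immediately followed by an $R$, and surjective precisely when every $R$ is immediately preceded by an $L$; in any quasi-stationary pattern violating one of these conditions $\sigma$ is not a bijection, so $\bar\varphi_\omega$ cannot be a Vershik map. In the remaining \emph{paired} patterns, where $L$'s and $R$'s appear only in consecutive $(L,R)$ pairs separated by stretches of $M$'s, we deploy a second sequence $\{y^{(n)}\}$ that agrees with $\widehat{e}_{i, \max}$ on coordinates $1, \ldots, n-1$ and then descends diagonally from $(n-1,i)$ toward lower-index vertices, continuing diagonally whenever the new edge turns out to be maximal (at an $R$-vertex) and switching to vertical propagation once a non-maximal edge is met (at an $L$- or $M$-vertex). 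The paired structure forces termination within one or two diagonal steps at some $j^* < i$ with $T(j^*) \in \{L,M\}$; tracing $\varphi_\omega$ through the construction then yields $\lim_n \varphi_\omega(y^{(n)}) = \ov{e}_{j^*, \min}$ with $j^* \neq \tau(i)$, contradicting continuity of $\bar\varphi_\omega$. The main technical burden is exactly this last step: in the paired regime neither the $x^{(n)}$-obstruction nor the non-bijectivity of $\sigma$ is available, so one has to check through a short case analysis on $T(i-1)$ (and on $T(i-2)$ when $T(i-1) = R$) that the $y^{(n)}$-descent always lands on a min-path distinct from $\ov{e}_{\tau(i), \min}$.
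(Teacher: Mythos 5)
Your parts (i) and (ii) are exactly the paper's argument. For part (iii) you are correct in substance but organize the proof differently. The paper's route is purely local: it shows that (under quasi-stationarity and $|I_{fr}|=|I_{fl}|$, hence both $=\aleph_0$) there always exists an adjacent pair with $i\in I_{fr}$ and $i-1\in I_{fl}$, and then, at the single maximal path $\widehat{e}_{i,\max}$, produces two sequences of non-maximal cylinders --- one staying at vertex $i$, one dipping through the edge into vertex $i-1$ (with one extra step when $i-1\in I_{fl}\setminus I_{fr}$) --- whose $\varphi_\omega$-images lie in minimal cylinders through $i-1$ on the one hand and through $i$ or $i+1$ on the other. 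You instead first compute the continuity-forced target $\tau(i)$ of every maximal path, rule out patterns where $\tau$ fails to be a bijection $I_{fr}\to I_{fl}$, and only then run the local two-sequence argument in the remaining ``paired'' regime. Your detour through $\tau$ is what lets you avoid the one configuration where the naive descent fails (namely $T(i-1)=L$, where the $y^{(n)}$-descent lands back on $\overline{e}_{\tau(i),\min}$); the paper avoids that same configuration simply by insisting $i-1\in I_{fl}$. Both work; the paper's version is shorter, yours makes the obstruction (non-uniqueness of the limit of $\varphi_\omega$ at a maximal path, versus non-bijectivity of the induced map on extreme paths) more explicit.

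Two small inaccuracies you should repair. First, your identification of the forced limit as $\overline{e}_{\tau(i),\min}$ assumes the successor of the perturbed vertical edge at $(n,i)$ is again vertical; if $a_n^{(i)}=2$ and the order at that vertex is middle with $f$ sitting directly below the maximal edge, the only non-maximal vertical edge has successor $f$, so the rewritten prefix is the minimal path to $(n,i+1)$ and the limit is $\overline{e}_{m,\min}$ with $m\ge i+1$ rather than $\tau(i)$. This does not endanger the final contradiction (the $x$-type limits always sit at index $\ge i$ while the $y$-type limits sit at index $<i$), but it means the induced map $\sigma$ need not literally equal $\tau$, so your injectivity/surjectivity dichotomy in step 2 is not quite as stated. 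Second, in the two-step descent case ($T(i-1)=R$, $T(i-2)=L$) the landing vertex $j^*=i-2$ is not in $I_{fl}$, so $\overline{e}_{j^*,\min}$ does not exist; the actual limit is $\overline{e}_{i-1,\min}$, which still differs from $\overline{e}_{\tau(i),\min}$, so the conclusion stands once the statement is corrected.
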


\begin{proof}

Part (i) is obvious since equality $|I_{fr}| = |I_{fl}|$ means that $|X_{max}| = |X_{min}|$. Since the sets $X_{max}$ and $X_{min}$ are always closed, the condition $|X_{max}| = |X_{min}|$ is necessary and sufficient for $\varphi_{\omega}$ to be extended to a Borel bijection of the whole space $X_B$.

(ii) If $I_{fr} = I_{fl} = \emptyset$, then $\varphi_{\omega}$ is a homeomorphism of the whole path space $X_B$. 

(iii) Note that the case $I_{fr} = I_{fl} = \emptyset$ is not possible for a  quasi-stationary order. Indeed, if the vertex $i \in \mathbb{N}$ is right-ordered then $i \in I_{fl}$ and there is a vertical minimal path that passes through the vertex $i$ on each level. Similarly, if the vertex $i$ is left-ordered then $i \in I_{fr}$ and there is a vertical maximal path passing through the vertex $i$. If the order $\omega_i$ of the vertex $i$ is middle then there are both infinite maximal path and infinite minimal path passing through the vertex $i$.
Assume that the map $\varphi_{\omega}$ can be extended to a Vershik homeomorphism on $X_B$. Then we have $|I_{fr}| = |I_{fl}|$. 
Moreover, since the order is quasi-stationary, we have $|I_{fr}| = |I_{fl}| = \aleph_0$. We will call a cylinder set maximal (minimal) if it corresponds to a maximal (minimal) finite path.

First, assume that $i \in I_{fr}$ and $i > 1$. 

Suppose $i-1 \in I_{fr} \cap I_{fl}$. Then for every $n$, every maximal cylinder set of length $n$ which ends in $i$ contains a non-maximal cylinder subset $[\ov g_1]$ of length $n+1$ which also ends in $i$ and a non-maximal cylinder subset $[\ov g_2]$ of length $n+1$ which ends in $i - 1$. The image $\varphi_w([\ov g_2])$ belongs to the minimal cylinder set of length $n$ which passes through vertex $i-1$ on each level. The image $\varphi_w([\ov g_1])$ belongs to the minimal cylinder set of length $n$ which ends in $i$ or $i+1$ and never passes through vertex $i-1$. Thus, we cannot extend $\varphi_{\omega}$ continuously to the maximal path which passes through vertex $i$.

If $i - 1 \in I_{fl} \setminus I_{fr}$, then the maximal edge which ends in $i-1$ is not vertical.  For every $n$, every maximal cylinder set of length $n$ which passes through vertex $i$ contains a non-maximal cylinder subset $[\ov g_1]$ of length $n+1$ which also ends in $i$ and a non-maximal cylinder subset $[\ov g_2]$ of length $n+2$ which passes through the maximal edge between vertices $i$ on level $n$ and $i-1$ on level $n+1$, and then through a non-maximal edge between vertex $i-1$ on level $n+1$ and vertex $i-1$ on level $n+2$. Again, the image $\varphi_w([\ov g_1])$ belongs to the minimal cylinder set of length $n$ which ends in $i$ or $i+1$, $\varphi_w([\ov g_2])$ belongs to the minimal cylinder set of length $n$ which passes through vertex $i-1$ on each level.

Thus, if $i \in I_{fr}$ and $i - 1 \in I_{fl}$ then $\varphi_{\omega}$ cannot be extended to a homeomorphism. It remains to notice that such a situation will always occur for any quasi-stationary order. Indeed, we have $|I_{fr}| = |I_{fl}| = \aleph_0$, hence we have infinitely many vertical infinite minimal paths and infinitely many vertical infinite maximal paths. Let $j \in \mathbb{N}$ be a vertex through which passes an infinite minimal path. Then there exists a vertex $k > j$ through which passes an infinite maximal path. Thus, we will necessarily find a natural number $i \in (j,k]$ such that $i \in I_{fr}$ and $i - 1 \in I_{fl}$.
\end{proof}

\medskip
\textbf{Acknowledgements.} 

We are very grateful to our colleagues, 
especially, P. Jorgensen, T. Raszeja, S. Sanadhya, M. Wata for valuable discussions. S.B. and O.K. are also grateful to the Nicolas Copernicus 
University in Toru\'n for its hospitality and support. S.B. acknowledges the hospitality of AGH University of Krak\'ow during his visit. O.K. is supported by the NCN (National Science Center, Poland) Grant 2019/35/D/ST1/01375 and by the program ``Excellence initiative - research university'' for the AGH University of Krak\'ow.
We thank the referees for their detailed comments on the paper which helped us to improve the exposition.

\bibliographystyle{alpha}
\bibliography{More_examples_on_GBD_and_measure_extension}

\end{document}

Questions: 
1) nonempty or non-empty
2) proper implies nonempty?